\newcommand{\bra}[1]{\left \langle #1 \right |}
\newcommand{\ket}[1]{\left | #1 \right \rangle}
\DeclareMathOperator{\Sp}{Sp}
\renewcommand{\d}{\dif}
\renewcommand{\d}{\dif}
\DeclareMathOperator{\ep}{\varepsilon}
\DeclareMathOperator{\Dept}{\cD_{\ep}^{\tau_1, \tau_2}}
\DeclareMathOperator{\Fept}{\cF_{\ep}^{\tau_1, \tau_2}}
\DeclareMathOperator{\Tr}{\mathrm{Tr}}
\newcommand{\cD}{\mathcal{D}}
\newcommand{\cX}{\mathcal{X}}
\newcommand{\cE}{\mathcal{E}}
\newcommand{\cH}{\mathcal{H}}
\newcommand{\cM}{\mathcal{M}}
\newcommand{\cS}{\mathcal{S}}
\newcommand{\cT}{\mathcal{T}}
\newcommand{\cF}{\mathcal{F}}
\newcommand{\cO}{\mathcal{O}}
\newcommand{\rmB}{\mathrm{B}}
\newcommand{\rmH}{\mathrm{H}}
\newcommand{\SDP}{\mathrm{H}_{\geq}}
\newcommand{\DP}{\mathrm{H}_>}
\renewcommand{\d}{{\mathrm d}}
\newcommand{\suchthat}{\ensuremath{\ : \ }} % such that inside, for example, the sets definition              
\newcommand{\restr}[1]{\lower3pt\hbox{$|_{#1}$}}
\newcommand{\argmmax}{\mathop{\mathrm{argmax}}}
\theoremstyle{plain}
\newtheorem{theo}{Theorem}[section]
\newtheorem{proposition}{Proposition}[section]
\theoremstyle{definition}
\newtheorem{rem}[theo]{Remark}
\newcommand{\UQOT}{\mathcal{QOT}_{\varepsilon}^{\tau_1,\tau_2}}
\newcommand{\QOT}{\mathcal{QOT}_{\varepsilon}}
\DeclareFontFamily{U}{mathx}{\hyphenchar\font45}
\DeclareFontShape{U}{mathx}{m}{n}{
	<5> <6> <7> <8> <9> <10>
	<10.95> <12> <14.4> <17.28> <20.74> <24.88>
	mathx10
}{}
\DeclareSymbolFont{mathx}{U}{mathx}{m}{n}
\DeclareMathSymbol{\bigtimes}{1}{mathx}{"91}
\title{Non-commutative Optimal Transport  \\ for semi-definite positive matrices}
\author[1,2,3]{Augusto Gerolin}
\author[1]{Nataliia Monina}
\affil[1]{Department of Mathematics and Statistics, University of Ottawa}
\affil[2]{Department of Chemistry and Biomolecular Sciences, University of Ottawa}
\affil[3]{Nexus for Quantum Technologies, University of Ottawa}
\begin{document}
\maketitle
\begin{abstract}
\noindent
We introduce the von Neumann entropy regularization of Unbalanced Non-commutative Optimal Transport, specifically Non-commutative Optimal Transport between semi-definite positive matrices (not necessarily with trace one). We prove the existence of a minimizer, compute the weak dual formulation and prove $\Gamma$-convergence results, demonstrating convergence to both Unbalanced Non-commutative Optimal Transport (as the Entropy-regularization parameter tends to zero) and von Neumann entropy regularized Non-commutative Optimal Transport problems (as the unbalanced penalty parameter tends to infinity). To draw an analogy to the Non-commutative case, we provide a concise introduction of the static formulation of Unbalanced Optimal Transport between positive measures and bounded cost functions.  
\end{abstract}

\section{Introduction}

Let $\ep>0$ be a positive number, $\cH_1$ and $\cH_2$ be finite-dimensional Hilbert spaces, $C\in \rmH(\cH_1\otimes\cH_2)$ be a Hermitian operator on $\cH_1\otimes\cH_2$, $\rho$ and $\sigma$ be density matrices, respectively, on $\cH_1$ and $\cH_2$. The von Neumann entropy regularized Non-commutative Optimal Transport \cite{FelGerPor23} is given by
\begin{equation}\label{QOT}
\QOT[\rho,\sigma] = \inf \left\lbrace \Tr[C\Gamma] + \ep\cS[\Gamma] \suchthat \Gamma \mapsto (\rho,\sigma) \right\rbrace,
\end{equation}
where $\cS[\Gamma]=\Tr[\Gamma\left(\log\Gamma-\Id\right)]$ is the von Neumann Entropy and $\Gamma\mapsto(\rho,\sigma)$ denotes the set of density matrices $\Gamma$ in $\cH_1\otimes\cH_2$ having partial traces given by $\rho$ and $\sigma$.

The static formulations of Non-commutative Optimal Transport (e.g., \cite{CalGolPau18,CalGolPau19,ColeEckFriZyc-MPAG-23, DPaTre19}) and its von Neumann entropy regularized counterpart \cite{FelGerPor23, Por23} are motivated by extending the (static) Optimal Transport Theory for probability measures  (e.g., \cite{AGS,CutPeyBook,Santabook,VilON}) into quantum states (e.g., density operators or density matrices).

In this note instead, we focus on the extension of the static formulation of von Neumann entropy regularized Non-commutative Optimal Transport for positive semi-definite and Hermitian operators (or Unbalanced Non-commutative Optimal Transport) given by
\begin{equation}\label{UQOT}
\UQOT[\rho,\sigma] = \inf \left\lbrace \Tr[C\Gamma] + \ep\cS[\Gamma] + \tau_1\cE[\Gamma_1|\rho] + \tau_2\cE[\Gamma_2|\sigma] \suchthat \Gamma\geq 0 \text{ and } \Gamma^*=\Gamma \right\rbrace,
\end{equation}
where $\tau_1,\tau_2>0$ are positive numbers, $C\in\rmH(\cH_1\otimes\cH_2)$ is a Hermitian operator in $\cH_1\otimes\cH_2$, $\rho\in\SDP(\cH_1)$ and $\sigma\in\SDP(\cH_2)$ are Hermitian semi-definite positive operators on $\cH_1$ and $\cH_2$ respectively,  $\Gamma_1,\Gamma_2$ are, respectively, the partial traces of $\Gamma_1 = \Tr_2[\Gamma]$, $\Gamma_2 = \Tr_1[\Gamma]$ in $\cH_2$ and $\cH_1$. Finally, the functional $\cE[\gamma_1|\gamma_2]$ is the Umegaki relative entropy between semi-definite positive and Hermitian operators $\gamma_1$ and $\gamma_2$
\[
\cE[\gamma_1|\gamma_2] = 
	\begin{cases}
		\Tr[\gamma_1(\log\gamma_1 - \log\gamma_2 - \Id)+\gamma_2] &\text{if } \ker \gamma_1 \subset \ker \gamma_2, \\
		+\infty &\text{otherwise}.
	\end{cases}
\]
% \nat{Should we write a remark that C has to be SDP and blabla? I came up with example when it's necessary also in commutative case}

The (von Neumann entropy regularized) Unbalanced Non-commutative Optimal Transport \eqref{UQOT} relaxes the constraint $\Gamma\mapsto(\rho,\sigma)$ in \eqref{QOT} and, in particular, the trace of the matrices $\rho$ and $\sigma$ must be finite (and not necessarily equal to one), allowing for unbalanced semi-definite positive and Hermitian operators.

The classical theory of Unbalanced Optimal Transport between positive measures has been introduced, independently, by Liero, Mielke and Savar\'e \cite{LiMiSa}, Chizat, Peyr{\'e}, Schmitzer and Vialard \cite{ChiPeyCom}, and Kondratyev, Monsaingeon and Vorotnikov \cite{KonMonVor16,KonMonVor16-arX}. This work proposes a generalization of the (static) Shannon-Entropy regularization of Unbalanced Optimal Transport introduced in \cite{FroZhaMobAraPog15} into the non-commutative setting. 

\noindent
\textbf{Main contributions:} The main results of this paper can be described as follows: (i) We provide an alternative proof of the existence (Theorem \ref{eq:dualunbalanced}) and characterization (Proposition \ref{prop:equiv_comp}) of the minimizer for the Shannon Entropy-regularized Unbalanced Optimal Transport for positive measures. (ii) We introduce the von Neumann entropy regularized Unbalanced Non-commutative Optimal Transport, prove the weak duality between the primal and dual problems (Theorem \ref{thm:UQOT_weakdual}), show the existence of the minimizer in \eqref{UQOT} (Proposition \ref{Fept:minimizer}), and, finally, we prove the $\Gamma-$convergence results (e.g., convergence of the minima) for \eqref{UQOT} (Theorem \ref{thm:UQOT_convergence_minimizer}) when the regularization parameters $\tau_1=\tau_2\to+\infty$ $(\ep>0$ fixed$)$ and when $\ep\to 0^+$ ($\tau_1,\tau_2>0$ are both fixed$)$. While our analysis focuses on finite-dimensional Hilbert spaces, some of our techniques are dimension-free, which are of notable significance in their own right. The exploration of the infinite-dimensional scenario remains a subject for future investigation.

%The Matrix Unbalanced Optimal Transport were previously considered in \cite{menon2021information, modin2017geometry,MonVor20} focusing on the Riemannian structure of statistical manifolds to devise natural metrics and geodesic flows in the space of positive Hermitian and density operators. 

\noindent
\textbf{Methodology and organization of the paper:} In section \ref{sec:unbalanced} we give a concise introduction of unbalanced optimal transport between positive measures and bounded costs. Our approach follows a similar duality strategy employed in \cite{DMaGer19,DMaGer20}. In section \ref{sec:unbalancedQOT}, we introduce the Unbalanced Non-commutative Optimal Transport problem, obtain weak duality results using the Legendre-Fenchel transforms, as well as prove the existence of the minimizer in \eqref{UQOT} by the direct method of Calculus of Variations. Finally, in section \ref{sec:unbalancedgammaQOT} we directly prove the $\Gamma-$convergence of \eqref{UQOT} to both von Neumann entropy regularized Non-commutative Optimal transport (as $\tau_1=\tau_2\to+\infty$) and Unbalanced Non-commutative Optimal Transport (as $\ep\to 0$), and by showing that the primal functional in \eqref{UQOT} is equi-coercive with respect to regularization parameters, we obtain the convergence of $\argmin$ of \eqref{UQOT}. %results (e.g., the convergence of the minima). %and showing equi-coercivity \textcolor{blue}{Write a little bit more about the methodology for the gamma-convergence.} 

\section{Unbalanced Optimal Transport}\label{sec:unbalanced}

This section aims to provide a brief, self-contained introduction of (static) Unbalanced Optimal Transport between positive measures, to draw an analogy to the Non-commutative case in section \ref{sec:unbalancedQOT}. A similar approach has also been considered in \cite{BiervonNeuSte-JMIV-23}, including proof of convergence for the Unbalanced Sinkhorn algorithm and the Unbalanced multi-marginal optimal transport theory.

% \nat{I may have example when the $\int c\d\gamma\to-\infty$, but it may happen that KL$\to+\infty$, but it's better to be on the safe side with just nonnegative $c$. }

Let $X$ and $Y$ be complete separable metric spaces, $\mu\in\mathcal{M}_+(X)$ and $\nu\in\mathcal{M}_+(Y)$ be positive measures, and let $c:X\times  Y \to\R$ be a measurable function on $X\times Y$. The Unbalanced Optimal Transport (UOT) problem is defined by
\begin{equation}\label{eq:UOTclassical}
\cO\cT^{\tau_1,\tau_2}(\mu,\nu) = \inf_{\gamma\in\mathcal{M}_{+}( X \times Y )} \int\limits_{ X \times Y }c(x,y)d\gamma + \tau_1\KL((e_1)_{\sharp}\gamma|\mu) + \tau_2\KL((e_2)_{\sharp}\gamma|\nu),
\end{equation}
where $e_1:X\times Y\to X$ and $e_2:X\times Y\to Y$ are the projection operators, i.e. $e_1(x,y) = x$, $e_2(x,y)=y$, and  $(e_i)_{\sharp}\gamma,$ for $ i=1,2$ denotes the push-forward of the measure $\gamma$ via the projector operator $e_i$. The functional $\KL$ is the Kullback-Leibler divergence between two positive measures $\alpha$ and $\beta$
\begin{equation}\label{eq:classicalKL}
\KL(\alpha|\beta) =\begin{cases} \ds\int \left[\dfrac{d\alpha}{d\beta}\left(\log\left(\dfrac{d\alpha}{d\beta}\right)-1\right)+1\right] d\beta, & \mbox{if $\alpha \ll \beta$}, \\
            +\infty & \mbox{otherwise},\end{cases}
\end{equation}
where $\frac{d\alpha}{d\beta}$ denotes the Radon-Nikodym derivative of $\alpha$ with respect to $\beta$.

The main idea behind UOT is to introduce a so-called \textit{slack variable} that represents the discrepancy between the masses of the source and target distributions. The UOT functional \eqref{eq:UOTclassical} extends classical Optimal Transport theory to handle scenarios where the mass (or density) of the data $\mu,\nu$ are positive measures rather than probability ones. 

Therefore, the problem \eqref{eq:UOTclassical} relaxes the constraint that the total mass of the source and target distributions must be equal, allowing for imbalanced mass distributions, in the sense that if $\mu,\nu$ are probability distributions, the UOT problem \eqref{eq:UOTclassical} reduces to the optimal transport problem when $\tau_1,\tau_2\to +\infty$

\begin{equation}\label{eq:OTclassical}
\cO\cT(\mu,\nu) = \inf_{\gamma\in\mathcal{M}_{+}( X \times Y )} \int\limits_{ X \times Y }c(x,y)d\gamma + i((e_1)_{\sharp}\gamma|\mu) + i((e_2)_{\sharp}\gamma|\nu),
\end{equation}
where $i$ denotes the indicator function, i.e. $i(\alpha|\beta) = 0$ if $\alpha = \beta$ and $i(\alpha|\beta)=+\infty$ otherwise.

%The first generalizations of optimal transport for unbalanced mass were first considered in the Dynamical (Benamou-Brenier) formulation \cite{BenBre00} by suitably relaxing the mass conservation constraint given by the continuity equation \cite{ChiPeyCom,KonMonVor16,LieMieSav18}. 

\subsection{Shannon-Entropy regularized Unbalanced Optimal Transport}

Let $\ep, \tau_1, \tau_2>0$ be positive numbers, $X$ and $Y$ be complete separable metric spaces. The Shannon entropy regularization of the Unbalanced Optimal Transport between positive measures $\mu\in\mathcal{M}_{+}( X )$ and $\nu\in\mathcal{M}_{+}( Y )$ is given by
\begin{equation}\label{eq:UOT}
\cO\cT^{\tau_1,\tau_2}_{\ep}(\mu,\nu) = \inf_{\gamma\in\mathcal{M}_+( X \times Y)} \int\limits_{ X \times Y}cd\gamma + \ep S(\gamma) + \tau_1\KL((e_1)_{\sharp}\gamma|\mu) + \tau_2\KL((e_2)_{\sharp}\gamma|\nu),
\end{equation}
where $c: X \times Y \to\R$ is a measurable cost function and $\KL$ is the Kullback-Leibler divergence \eqref{eq:classicalKL} between two positive measures $\alpha$ and $\beta$.  The functional $S:\mathcal{M}_{+}( X \times Y)\to\R$ is the Shannon-entropy 
\begin{equation}\label{eq:shannon_entropy}
    S(\gamma) = \begin{cases} \ds\int_{X\times Y} \frac{d\gamma}{d(\mu\otimes\nu)}\left(\log\left(\frac{d\gamma}{d(\mu\otimes\nu)}\right)-1\right)\mu\otimes\nu & \mbox{if $\gamma \ll \mu\otimes\nu$} \\
            +\infty & \mbox{otherwise} \end{cases},
\end{equation}
where $\frac{d\gamma}{d(\mu\otimes\nu)}$ denotes the Radon-Nikodym derivative of $\gamma$ with respect to $\mu\otimes\nu$.

The existence of a minimizer in \eqref{eq:UOT} is guaranteed, for instance, when the cost $c$ is nonnegative and lower semicontinuous \cite{ChiPeyCom}. 

\subsection{Dual problem}
Let $X,~Y$ be complete separable metric spaces, $\mu\in \cM_+( X ), \nu\in\cM_+( Y )$ be positive measures and $c\in L^{\infty}(X\times Y)$ be a cost function. For given $\ep, \tau_1, \tau_2>0$ positive numbers, we define the unbalanced dual functional $D^{\tau_1, \tau_2}_{\ep}:L^\infty( X )\times L^\infty( Y )\to \R$ as
\begin{equation*}
    D^{\tau_1, \tau_2}_{\ep}(u, v) = -\tau_1\int\limits_{ X } (e^{-\frac{u}{\tau_1}}-1) \d\mu -\tau_2\int\limits_{ Y } (e^{-\frac{v}{\tau_2}}-1)\d\nu - 
    \ep \int\limits_{ X \times Y } e^{\frac{u+v-c}{\ep}}\d\mu\otimes\nu.
\end{equation*}
% where we denote 
% \begin{equation}\label{eq:Lexpdef}
%     \Lexp(\R^d) = \lbrace{\omega\in L^{\infty}(\R^d)\suchthat \omega\text{ is measurable, and } \int_{\R^d} \exp\left(\frac{\omega(x)}{\ep}\right)\dx}<\infty\rbrace.
% \end{equation}
The corresponding dual (Kantorovich) problem takes the form
\begin{equation}\label{eq:dualunbalanced}
\sup\lbrace D^{\tau_1, \tau_2}_{\ep}(u,v)\suchthat u\in L^\infty( X ), v\in L^\infty( Y )\rbrace. 
\end{equation}

% \textcolor{blue}{I have removed the remark because we do not refer to it anywhere else in the text. Could you please add one extra passage for this result? Are you only using it at Proposition 2.2? Probably we can directly to the computation there.}

% %\begin{rem}\label{rem:dual_func_finite}
%     Notice that we can trivially bound the dual functional by
% \[
% D^{\tau_1,\tau_2}_{\ep}(u,v) \leq \tau_1 \mu(X) + \tau_2 \nu(Y)<\infty.
% \]
% %\end{rem}

In the following, we will prove the existence of a maximizer in \eqref{eq:dualunbalanced} via the direct method of Calculus of Variations. We follow the approach introduced in \cite{DMaGer19} and define the Unbalanced Entropic $c-$transform.

\begin{deff}[Unbalanced Entropic $c$-transform or $(c,\tau,\ep)$-transform]
Let $\ep,\tau_1, \tau_2>0$ be the positive parameters, $X$ and $Y$ be complete separable metric spaces, $\mu \in \cM_+( X ),~ \nu \in \cM_+( Y )$ be positive measures, and let $c\in L^\infty(X\times Y)$ be a cost function. Given $u\in L^\infty( X )$ and $v\in  L^\infty( Y )$, the entropic $(c,\tau_2,\ep)$-transform of a function $u$ is defined by a map $(\cdot)^{(c,\tau_2,\ep)}:L^\infty( X )\to L^{\infty}( Y )$ as

\begin{equation}
    \label{u:transform}
    u^{(c,\tau_2,\ep)}(y) =
    -\dfrac{\tau_2\ep}{\tau_2+\ep}
        \log \int\limits_{ X }
            \exp 
            \left(
                \frac{u(x) - c(x,y)}{\ep}
            \right) \d\mu(x),
\end{equation}

and analogously for $v$, we define the map $(\cdot)^{(c,\tau_1,\ep)}:L^\infty( Y )\to L^{\infty}( X )$ as

\begin{equation}
    \label{v:transform}
    v^{(c,\tau_1,\ep)}(x) =
    -\dfrac{\tau_1\ep}{\tau_1+\ep} 
        \log \int\limits_{ Y }
            \exp 
            \left(
                \frac{v(y) - c(x,y)}{\ep}
            \right) \d\nu(y).
\end{equation}

% where:
% \[
%     \beta_m = 
%     \bigg(
%         \frac{1}{\epsilon} + \frac{1}{\tau_m}
%     \bigg)^{-1},\ m=\overline{0,N}.
% \]
\end{deff}
% \begin{rem}
%     The transforms \eqref{u:transform} and \eqref{v:transform} can be obtained using the variational principle for the Dual functional \eqref{eq:dualunbalanced}
% \end{rem}
%\begin{rem}
Note that when $\tau_1,\tau_2\to+\infty$, the $(c,\tau,\ep)$-transforms \eqref{u:transform} and \eqref{v:transform} become the classical $(c,\ep)-$transform (or Sinkhorn iterations) \cite{DMaGer19} for the Shannon-Entropy regularized optimal transport problem \eqref{eq:OTclassical}.
%\end{rem}

% \nat{Do we need this as Lemma at all? Since we do all the computations w.r.t product measure, and the potentials are in $L^\infty$ the bounds are really trivial, I can change it into a shorter remark right after the definition}

The following proposition shows that the Unbalanced $(c,\tau,\ep)-$transforms are well-defined.

%\nat{Does this sentence still make sense with all the changes? The following proposition shows that the regularity of the potentials $u$ and $v$, and the cost function $c$ can be translated into the regularity of the potentials via the $(c,\tau,\ep)-$transforms. In particular, the $(c,\tau,\ep)-$transforms are well-defined.}

\begin{proposition}\label{prop:aprioribdd}
Let $\ep,\tau_1, \tau_2>0$ be the positive parameters, $X$ and $Y$ be complete separable metric spaces, $\mu \in \cM_+( X ),~ \nu \in \cM_+( Y )$ be positive measures, and let $c\in L^\infty(X\times Y)$ be a cost function. If $u \in L^\infty( X ),~ v\in  L^\infty( Y )$, then $u^{(c,\tau_2,\ep)}\in L^{\infty}( Y )$ and $v^{(c,\tau_1,\ep)}\in L^{\infty}( X )$.
\end{proposition}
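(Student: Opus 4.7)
The plan is to derive a pointwise (in $y$) two-sided bound on the argument of the logarithm in \eqref{u:transform} using only the $L^\infty$ hypotheses on $u$ and $c$ and the finiteness of $\mu$, and then conclude by monotonicity of $\log$.

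First I would set $M := \|u\|_{L^\infty(X)} + \|c\|_{L^\infty(X\times Y)}$, which is finite by hypothesis. For $\mu$-a.e.\ $x\in X$ and every $y\in Y$, one has $|u(x)-c(x,y)| \le M$, hence
\[
e^{-M/\ep} \;\le\; \exp\!\left(\frac{u(x)-c(x,y)}{\ep}\right) \;\le\; e^{M/\ep}.
\]
Integrating against $\mu$ (finite and positive by assumption, with $0<\mu(X)<\infty$) gives, uniformly in $y\in Y$,
\[
e^{-M/\ep}\,\mu(X) \;\le\; \int_X \exp\!\left(\frac{u(x)-c(x,y)}{\ep}\right)\d\mu(x) \;\le\; e^{M/\ep}\,\mu(X).
\]
Since both bounds are strictly positive and finite, the logarithm is finite, and multiplying by $-\tfrac{\tau_2\ep}{\tau_2+\ep}$ (a finite constant) yields
\[
\left|\, u^{(c,\tau_2,\ep)}(y)\,\right| \;\le\; \frac{\tau_2\ep}{\tau_2+\ep}\Bigl(\tfrac{M}{\ep} + |\log \mu(X)|\Bigr)
\]
uniformly in $y$. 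Measurability of $u^{(c,\tau_2,\ep)}$ in $y$ follows from Fubini--Tonelli applied to the measurable function $(x,y)\mapsto \exp((u(x)-c(x,y))/\ep)$, so $u^{(c,\tau_2,\ep)}\in L^\infty(Y)$.

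The argument for $v^{(c,\tau_1,\ep)}$ is completely symmetric: one integrates over $Y$ against $\nu$, uses $\|v\|_\infty,\|c\|_\infty<\infty$ and $0<\nu(Y)<\infty$, and obtains the analogous uniform bound. The only point that is not entirely routine is ensuring that $\mu(X)$ and $\nu(Y)$ are both positive and finite; this is precisely the standing assumption that $\mu\in\cM_+(X)$, $\nu\in\cM_+(Y)$ are (nontrivial) finite positive measures, and it is what prevents the logarithm from diverging to $\pm\infty$. No other obstacle arises, so this proposition is essentially a direct consequence of the $L^\infty$ hypotheses combined with finiteness of the reference measures.
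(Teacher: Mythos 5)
Your proposal is correct and follows essentially the same route as the paper: bound $u(x)-c(x,y)$ pointwise by $\pm(\|u\|_\infty + \|c\|_\infty)$, integrate against the finite measure $\mu$, take logarithms, and multiply by the constant $\tau_2\ep/(\tau_2+\ep)$. The paper states the resulting bound with the $\log\mu(X)$ term moved to the left-hand side rather than absorbed into the right-hand side, but this is only cosmetic; the added remark on measurability via Fubini--Tonelli is a harmless and reasonable elaboration of a point the paper leaves implicit.
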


\begin{proof}
It suffices to show that the integral part of the transform is bounded, exploiting the fact that $||c||_\infty, ||u||_\infty, ||v||_\infty<\infty$. The following estimates will be shown for $u^{(c,\tau_2,\ep)}$, and the identical bound will also hold for $v^{(c,\tau_1,\ep)}$. 

By monotonicity of the exponential, the inequality
\[
\exp 
            \left(
                \frac{-||u||_\infty - ||c||_\infty}{\ep} 
            \right) \leq
    \exp 
\left(
    \frac{u(x) - c(x,y)}{\ep} 
\right) \leq 
\exp 
            \left(
                \frac{||u||_\infty+ ||c||_\infty}{\ep} 
            \right)
\]
holds $\mu$ and $\nu-$almost everywhere, and since $\mu(X)$ is a finite positive value,  taking the integrals and logarithms on both sides we easily obtain 
\[
    \left|\left|u^{(c,\tau_2,\ep)} +\dfrac{\tau_2\ep}{\tau_2+\ep}\log(\mu(X))\right|\right|_\infty
    \leq \dfrac{\tau_2\ep}{\tau_2+\ep}\left(\frac{||u||_\infty + ||c||_\infty}{\ep}\right),
\]

% By the properties of exponential
% \begin{equation*}
% \begin{aligned}
% &\int\limits_{ X }\exp 
% \left(
%     \frac{u(x) - c(x,y)}{\ep} 
% \right) \d\mu(x)\geq \int\limits_{ X }
%             \exp 
%             \left(
%                 \frac{-||u||_\infty - ||c||_\infty}{\ep} 
%             \right) \d\mu = \\
%             =&\exp 
%             \left(
%                 \frac{-||u||_\infty - ||c||_\infty}{\ep} 
%             \right) \mu(X)>0,
% \end{aligned}
% \end{equation*}
% and similarly, 
% \begin{equation*}
%     \begin{aligned}
% &\int\limits_{ X }\exp 
% \left(
%     \frac{u(x) - c(x,y)}{\ep} 
% \right) \d\mu(x)\leq \int\limits_{ X }
%             \exp 
%             \left(
%                 \frac{||u||_\infty + ||c||_\infty}{\ep} 
%             \right) \d\mu = \\
%             =&\exp 
%             \left(
%                 \frac{-||u||_\infty - ||c||_\infty}{\ep} 
%             \right) \mu(X)< \infty.
% \end{aligned}
% \end{equation*}
% Consequently, it follows that 
% \begin{equation*}
%     \dfrac{\tau_2\ep}{\tau_2+\ep} (\log g(y) - \log I^u) \leq u^{(c,\tau_2,\ep)}(y) \leq \dfrac{\tau_2\ep}{\tau_2+\ep} (\log g(y) - \log I_u),
% \end{equation*}
which concludes the proof.
\end{proof}

The next lemma shows that the Unbalanced  Entropic $c$-transform increases the value of the dual functional \eqref{eq:dualfunctional}. 

\begin{lemma}\label{lemma:increasedual}
Let $\ep,\tau_1, \tau_2>0$ be the positive parameters, $X$ and $Y$ be complete separable metric spaces, $\mu \in \cM_+( X ),~ \nu \in \cM_+( Y )$ be positive measures, and let $c\in L^\infty(X\times Y)$ be a cost function. Assume that $u \in L^\infty( X ), v \in L^\infty( Y )$. Then:
\begin{equation}\label{eq:Dutransform}
    D^{\tau_1,\tau_2}_{\ep}(u, v) \leq D^{\tau_1,\tau_2}_{\ep}(u, u^{(c,\tau_2,\ep)}), \text{ for all } u\in L^\infty( X ),
\end{equation}
\begin{equation}\label{eq:optcond}
    D^{\tau_1,\tau_2}_{\ep}(u, v) = D^{\tau_1,\tau_2}_{\ep}(u, u^{(c,\tau_2,\ep)}), \text{ iff } v = u^{(c,\tau_2,\ep)}.
\end{equation}

and, analogously,
\begin{equation}\label{eq:Dvtransform}
    D^{\tau_1,\tau_2}_{\ep}(u, v) \leq D^{\tau_1,\tau_2}_{\ep}(v^{(c,\tau_1,\ep)}, v), \text{ for all } v\in L^\infty( Y ),
\end{equation}
\begin{equation}\label{eq:optcond2}
    D^{\tau_1,\tau_2}_{\ep}(u, v) = D^{\tau_1,\tau_2}_{\ep}(v^{(c,\tau_1,\ep)}, v), \text{ iff } u = v^{(c,\tau_1,\ep)}.
\end{equation}

\end{lemma}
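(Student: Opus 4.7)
The plan is to fix $u\in L^\infty(X)$ and to study the functional $D_u(v):=D^{\tau_1,\tau_2}_{\ep}(u,v)$ as a concave optimization problem on $L^\infty(Y)$. I would begin by noting that the two terms of $D_u$ that depend on $v$, namely
\[
-\tau_2\int_Y (e^{-v/\tau_2}-1)\,\d\nu \quad\text{and}\quad -\ep\int_{X\times Y} e^{(u+v-c)/\ep}\,\d\mu\otimes\nu,
\]
are strictly concave in $v$ since $t\mapsto e^{-t/\tau_2}$ and $t\mapsto e^{t/\ep}$ are strictly convex. Thus $D_u$ is strictly concave, so it admits at most one maximizer, which, if it exists, must be uniquely characterized by the vanishing of its Gateaux derivative.

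Next, I would compute that derivative. For any test direction $\varphi\in L^\infty(Y)$, using the $L^\infty$ bounds on $u$, $v$, $c$ to justify differentiation under the integral via dominated convergence, one obtains
\[
\frac{d}{dt}\bigg|_{t=0} D_u(v+t\varphi) = \int_Y \varphi(y)\left[e^{-v(y)/\tau_2} - e^{v(y)/\ep}\int_X e^{(u(x)-c(x,y))/\ep}\,\d\mu(x)\right]\d\nu(y).
\]
Requiring this to vanish for all $\varphi\in L^\infty(Y)$ forces the bracket to equal zero $\nu$-almost everywhere, i.e.
\[
e^{-v(y)/\tau_2} = e^{v(y)/\ep}\int_X e^{(u(x)-c(x,y))/\ep}\,\d\mu(x) \qquad \nu\text{-a.e.}
\]
Taking logarithms and solving the resulting linear equation for $v(y)$ yields exactly the defining formula \eqref{u:transform} for $u^{(c,\tau_2,\ep)}$.

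Combining the two steps concludes the argument: Proposition \ref{prop:aprioribdd} guarantees that $u^{(c,\tau_2,\ep)}\in L^\infty(Y)$ is an admissible competitor, so by strict concavity it is the unique maximizer of $D_u$, which simultaneously gives the inequality \eqref{eq:Dutransform} and the equality characterization \eqref{eq:optcond}. The analogous statements \eqref{eq:Dvtransform} and \eqref{eq:optcond2} then follow by the same argument after exchanging the roles of the two marginals $(X,\mu,\tau_1)\leftrightarrow(Y,\nu,\tau_2)$ and the variables $u\leftrightarrow v$, requiring no new work.

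I do not anticipate a genuine obstacle here: the only technical point is the interchange of differentiation and integration, which is immediate from the $L^\infty$ hypotheses; the real content is the observation that the $(c,\tau_2,\ep)$-transform is precisely engineered so that it solves the pointwise Euler-Lagrange equation for maximizing $D_u$ in $v$, and strict concavity upgrades this stationarity to a unique global maximum.
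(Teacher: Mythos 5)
Your proof is correct and is essentially the same argument as the paper's: both identify the first-order stationarity condition for the $v$-dependent part of $D^{\tau_1,\tau_2}_{\ep}(u,\cdot)$ and then use strict convexity/concavity to conclude the critical point is the unique global maximizer. The only difference is presentational—you compute the Gateaux derivative of $D^{\tau_1,\tau_2}_{\ep}(u,\cdot)$ directly, whereas the paper first substitutes the definition \eqref{u:transform} to isolate a pointwise scalar function $h(t)=\tau_2 e^{-t/\tau_2}+\ep e^{t/\ep-(\tau_2+\ep)a/(\tau_2\ep)}$ and minimizes it by scalar calculus—but the Euler--Lagrange condition you derive is exactly the one the paper's $h'(t)=0$ produces.
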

\begin{proof}
Consider the dual functional
\begin{equation*}
\begin{aligned}
    D^{\tau_1, \tau_2}_{\ep}(u, v) =& -\tau_1\int\limits_{ X } (e^{-\frac{u}{\tau_1}}-1) \d\mu -\tau_2\int\limits_{ Y } (e^{-\frac{v}{\tau_2}}-1)\d\nu - 
    \ep \int\limits_{ X \times Y } e^{\frac{u+v-c}{\ep}}\d\mu\otimes\nu= \\
    =& -\tau_1\int\limits_{ X } (e^{-\frac{u}{\tau_1}}-1) \d\mu -\tau_2\int\limits_{ Y } (e^{-\frac{v}{\tau_2}}-1)\d\nu - 
    \ep \int\limits_{Y }e^{\frac{v}{\ep}} \left[\int_X e^{\frac{u-c}{\ep}}\d\mu\right] \d\nu,
\end{aligned}
\end{equation*}
and using the entropic transform (\ref{u:transform}) we can express the integral 
\begin{equation*}
\int_X e^{\frac{u-c}{\ep}}\d\mu = \exp\left(-\frac{(\tau_2+\ep)u^{(c,\tau_2,\ep)}}{\tau_2\ep}\right),
\end{equation*}
thus
\begin{equation*}
\begin{aligned}
    D^{\tau_1, \tau_2}_{\ep}(u, v) =& -\tau_1\int\limits_{ X } (e^{-\frac{u}{\tau_1}}-1) \d\mu -\tau_2\int\limits_{ Y } (e^{-\frac{v}{\tau_2}}-1)\d\nu - 
    \ep \int\limits_{Y }e^{\frac{v}{\ep}} e^{-\frac{(\tau_2+\ep)u^{(c,\tau_2,\ep)}}{\tau_2\ep}} \d\nu \\
    =&-\tau_1\int\limits_{ X } (e^{-\frac{u}{\tau_1}}-1) \d\mu -\int\limits_{ Y } \left[\tau_2(e^{-\frac{v}{\tau_2}}-1) + 
    \ep e^{\frac{v}{\ep}-\frac{(\tau_2+\ep)u^{(c,\tau_2,\ep)}}{\tau_2\ep}} \right]\d\nu.
\end{aligned}
\end{equation*}
Invoking the variational principle one can show that, in fact, 
\begin{equation*}
    u^{(c,\tau_2,\ep)}\in \argmin\limits_{v\in L^\infty( Y )} \int\limits_{ Y } \left[\tau_2(e^{-\frac{v}{\tau_2}}-1) + 
    \ep e^{\frac{v}{\ep}-\frac{(\tau_2+\ep)u^{(c,\tau_2,\ep)}}{\tau_2\ep}} \right]\d\nu,
\end{equation*}
and in particular, since the function $h(t) = \tau_2e^{-\frac{t}{\tau_2}} + 
    \ep e^{\frac{t}{\ep}-\frac{(\tau_2+\ep)a}{\tau_2\ep}}$ is strictly convex in $t$, the minimizer $t=a$ is unique. Therefore, \eqref{eq:Dutransform} and \eqref{eq:optcond} hold, and by an identical computation \eqref{eq:Dvtransform}, \eqref{eq:optcond2} hold as well.
\end{proof}

\begin{rem}
As a consequence of Lemma \ref{lemma:increasedual}, at least when the cost $c\in L^\infty(X\times Y)$, the Unbalanced Entropic $c-$transform can be equivalently defined as  
\begin{equation}\label{eq:ctauepabstract}
    v^{(c,\tau_1,\ep)} \in \argmmax\limits_{u\in L^\infty( X )}\lbrace D^{\tau_1,\tau_2}_{\ep}(u,v)\rbrace, ~~~~\text{and}~~~~~ u^{(c,\tau_2,\ep)} \in \argmmax_{v\in L^\infty( Y )}\lbrace D^{\tau_1,\tau_2}_{\ep}(u,v)\rbrace.
\end{equation}
\end{rem}

The following proposition will play the main role in showing the existence of the maximizer of the
dual problem.

\begin{proposition}\label{prop:sublevel_set_transform}Let $\ep,\tau_1, \tau_2>0$ be the positive parameters, $X$ and $Y$ be complete separable metric spaces, $\mu \in \cM_+( X ),~ \nu \in \cM_+( Y )$ be positive measures, and let $c\in L^\infty(X\times Y)$ be a cost function. Then for fixed $M\geq 0$ there exist constants $\alpha, \beta\in \R$ such that for any $u \in L^\infty( X ), v \in L^\infty( Y )$ satisfying 
\begin{equation}\label{eq:Dept_sublevel}
    D^{\tau_1,\tau_2}_{\ep}(u,v) \geq -M,
\end{equation}
holds that $||u^{(c,\tau_2,\ep)}||_{\infty}\leq \alpha$, and $||v^{(c,\tau_1,\ep)}||_{\infty}\leq \beta.$
\end{proposition}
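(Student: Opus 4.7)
The plan is to convert the hypothesis $D^{\tau_1,\tau_2}_\ep(u,v)\geq -M$ into integrability estimates on $e^{-u/\tau_1}$ and $e^{-\tilde u/\tau_2}$, where $\tilde u:=u^{(c,\tau_2,\ep)}$, and then upgrade those to pointwise bounds via Jensen's inequality together with the bounded-oscillation property of the $(c,\tau_2,\ep)$-transform.

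First, I would invoke Lemma~\ref{lemma:increasedual} to replace $v$ by $\tilde u$, obtaining $D^{\tau_1,\tau_2}_\ep(u,\tilde u)\geq -M$. Rewriting the third term of $D$ using the transform identity $\int_X e^{(u-c)/\ep}\d\mu = \exp\bigl(-(\tau_2+\ep)\tilde u/(\tau_2\ep)\bigr)$ and collecting terms recasts the hypothesis as
\[
\tau_1\int_X(e^{-u/\tau_1}-1)\d\mu + \int_Y\bigl[(\tau_2+\ep)e^{-\tilde u/\tau_2}-\tau_2\bigr]\d\nu \leq M.
\]
Since both exponentials are nonnegative, dropping either contribution in turn yields
\[
\int_X e^{-u/\tau_1}\d\mu \leq K_u, \qquad \int_Y e^{-\tilde u/\tau_2}\d\nu \leq K_v,
\]
with $K_u, K_v$ explicit constants depending only on $M,\ep,\tau_1,\tau_2,\mu(X),\nu(Y)$.

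For the \emph{upper} bound on $\tilde u$, I would apply Jensen's inequality (with $\exp$ convex and the probability measure $\mu/\mu(X)$) to $e^{-u/\tau_1}$ to lower-bound the $\mu$-average $\bar u:=\mu(X)^{-1}\int u\,\d\mu$, and then apply Jensen once more to $F(y):=\int_X e^{(u(x)-c(x,y))/\ep}\d\mu$, giving $F(y)\geq \mu(X)\exp((\bar u-\|c\|_\infty)/\ep)$. Substituting into $\tilde u(y)=-\frac{\tau_2\ep}{\tau_2+\ep}\log F(y)$ produces a uniform upper bound $\tilde u\leq\alpha_+$.

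For the \emph{lower} bound on $\tilde u$, since $c\in L^\infty(X\times Y)$ the definition of the transform immediately yields the oscillation estimate $|\tilde u(y)-\tilde u(y')|\leq L:=2\tau_2\|c\|_\infty/(\tau_2+\ep)$. Jensen applied to $\int_Y e^{-\tilde u/\tau_2}\d\nu\leq K_v$ gives a lower bound on the $\nu$-average $\bar{\tilde u}$, and the oscillation bound then propagates it pointwise: $\tilde u(y)\geq\bar{\tilde u}-L=:\alpha_-$. Setting $\alpha:=\max(|\alpha_+|,|\alpha_-|)$ yields $\|\tilde u\|_\infty\leq\alpha$. The symmetric argument---starting from the companion inequality $D(\tilde v,v)\geq D(u,v)\geq -M$ in Lemma~\ref{lemma:increasedual}---gives $\|\tilde v\|_\infty\leq\beta$. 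The main subtlety is that the upper and lower bounds on $\tilde u$ come from entirely different mechanisms (integrability of $u$ transferred to $\tilde u$ via two Jensen steps, versus integrability of $\tilde u$ itself combined with the $c$-transform oscillation), so one must track constants through both chains carefully to ensure $\alpha,\beta$ depend only on $M$ and the fixed data.
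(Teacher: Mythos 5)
Your argument is correct, but it routes the lower bound on $\tilde u:=u^{(c,\tau_2,\ep)}$ differently from the paper's. Both proofs obtain the upper bound on $\tilde u$ the same way: Jensen applied to $\int_X e^{-u/\tau_1}\,\d\mu\leq K$ gives a lower bound on the $\mu$-average of $u$, and another application of Jensen to $F(y)=\int_X e^{(u-c)/\ep}\,\d\mu$ turns that into a pointwise lower bound on $F$, hence an upper bound on $\tilde u$. For the \emph{lower} bound, the paper stays with the original pair $(u,v)$: it first feeds the $L^1$ lower bound on $\int v\,\d\sigma$ back into the third term $\ep\int e^{(u+v-c)/\ep}$ to get an upper bound on $\int_X e^{u/\ep}\,\d\mu$, and from there an upper bound on $F$, hence a lower bound on $\tilde u$. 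You instead invoke Lemma~\ref{lemma:increasedual} to replace $v$ by $\tilde u$, which collapses the third dual term to $\ep\int_Y e^{-\tilde u/\tau_2}\,\d\nu$ via the transform identity; from the resulting bound $\int_Y e^{-\tilde u/\tau_2}\,\d\nu\leq K_v$ and the bounded-oscillation property $|\tilde u(y)-\tilde u(y')|\leq 2\tau_2\|c\|_\infty/(\tau_2+\ep)$ of the $(c,\tau_2,\ep)$-transform, you get the pointwise lower bound. What each route buys: the paper's treatment is symmetric in $(u,v)$ and self-contained in the three terms of $D^{\tau_1,\tau_2}_\ep$, producing the bounds on $u^{(c,\tau_2,\ep)}$ and $v^{(c,\tau_1,\ep)}$ in one pass; yours exploits a structural feature of the transform (uniform oscillation control) that is standard in entropic OT and gives a slightly cleaner separation of the two bounds, at the modest cost of appealing to the preceding lemma. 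Both are valid; just make sure, when writing this up, to spell out that the oscillation bound is uniform ($\nu$-a.e.\ in $y,y'$) so that integrating it against $\d\nu/\nu(Y)$ legitimately upgrades the average lower bound to a pointwise one.
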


\begin{proof}
Suppose $u$ and $v$ satisfy condition \eqref{eq:Dept_sublevel}, then we obtain the following relation
\[
0\leq \tau_1\int\limits_{ X } e^{-\frac{u}{\tau_1}} \d\mu +\tau_2\int\limits_{ Y } e^{-\frac{v}{\tau_2}}\d\nu + 
    \ep \int\limits_{ X \times Y } e^{\frac{u+v-c}{\ep}}\d\mu\otimes\nu \leq M + \tau_1\mu(X) + \tau_2\nu(Y) \equiv A.
\]
See that each of the integral terms is nonnegative, and thus bounded by $A$, as their sum is bounded from above by $A$. Consequently, we can consider these three inequalities individually.

In the following, we will exploit the convexity of the exponential function and apply Jensen's inequality to the normalized probability measures 
\[
\d\rho\equiv \dfrac{1}{\mu(X)}\d\mu \quad\quad\text{and} \quad\quad \d\sigma \equiv \dfrac{1}{\nu(Y)}\d\nu.
\]

For the first term, by the direct application of Jensen's inequality
\begin{align*}
    A\geq \tau_1\int\limits_{ X } e^{-\frac{u}{\tau_1}} \d\mu = \tau_1\mu(X)\int\limits_{ X } e^{-\frac{u}{\tau_1}} \d\rho \geq \tau_1\mu(X)\exp\left(-\frac{\int_X u\d\rho}{\tau_1}\right)>0,
\end{align*}
and, in particular, we see that 
\begin{align}\label{eq:L1_u_bound}
    \int\limits_{ X }u\d\rho \geq \tau_1 \log\left(\dfrac{\tau_1\mu(X)}{A}\right) \equiv \omega_1.
\end{align}

By an analogous computation, we also obtain
\begin{align}\label{eq:L1_v_bound}
    \int\limits_{ Y }v\d\sigma \geq \tau_2 \log\left(\dfrac{\tau_2\nu(Y)}{A}\right) \equiv \omega_2.
\end{align}

Now, write the remaining term as 
\begin{align*}
    A\geq \ep \int\limits_{ X \times Y } e^{\frac{u+v-c}{\ep}}\d\mu\otimes\nu = \ep \int\limits_{ X}e^{\frac{u}{\ep}} \left[\int\limits_Y e^{\frac{v-c}{\ep}}\d\nu\right] \d\mu  = \ep \int\limits_{ X}e^{\frac{u}{\ep}}\nu(Y) \left[\int\limits_Y e^{\frac{v-c}{\ep}}\d\sigma\right] \d\mu.
\end{align*}

Then by the monotonicity and convexity of the exponential, see that
\begin{align*}
    A\geq \ep \nu(Y)\int\limits_{ X}e^{\frac{u}{\ep}} \left[\int\limits_Y e^{\frac{v-||c||_\infty}{\ep}}\d\sigma\right] \d\mu \geq \ep\nu(Y) \int\limits_{ X}e^{\frac{u}{\ep}} \exp\left(\frac{\int_Y v\d\sigma -||c||_\infty}{\ep}\right)   \d\mu,
\end{align*}
and applying \eqref{eq:L1_v_bound} to the latter expression, by monotonicity we obtain
\begin{align*}
    A\geq \ep\nu(Y) \int\limits_{ X}e^{\frac{u}{\ep}} \exp\left(\frac{\omega_2 -||c||_\infty}{\ep}\right)   \d\mu,
\end{align*}
or, equivalently,
\begin{align}\label{eq:Lexp_u_bound}
    \int_X e^{\frac{u}{\ep}}\d\mu \leq \dfrac{A}{\ep \nu(Y)} e^{-\frac{\omega_2-||c||_\infty}{\ep}}.
\end{align}

By symmetry, we also get similar bounds for $v$
\begin{align}\label{eq:Lexp_v_bound}
    \int_Y e^{\frac{v}{\ep}}\d\nu \leq \dfrac{A}{\ep \mu(X)} e^{-\frac{\omega_1-||c||_\infty}{\ep}}.
\end{align}

Finally, we can now find the upper and lower bounds for the integral parts of $u^{(c,\tau_2,\ep)}$ and $v^{(c,\tau_1,\ep)}$ that are independent of $u$ and $v$. 

Using \eqref{eq:Lexp_u_bound}, see that 
\begin{align*}
    \int\limits_{ X }e^{
    \frac{u - c}{\ep}} \d\mu \leq \int\limits_{ X }e^{
    \frac{u + ||c||_{\infty}}{\ep}} \d\mu \leq \dfrac{A}{\ep \nu(Y)} \exp\left(-\frac{\omega_2-2||c||_\infty}{\ep}\right).
\end{align*}
Conversely, applying Jensen's inequality once more, along with \eqref{eq:L1_u_bound}
\begin{equation*}
\begin{aligned}
    \int\limits_{ X }e^{
    \frac{u - c}{\ep}} \d\mu\geq \mu(X)\int\limits_{ X }e^{
    \frac{u - ||c||_{\infty}}{\ep}} \d\rho \geq \mu(X) \exp\left(\frac{\int_X u \d\rho - ||c||_{\infty}}{\ep}\right) \geq \mu(X) \exp\left(\frac{\omega_1 - ||c||_{\infty}}{\ep}\right). 
\end{aligned}
\end{equation*}

By identical computations for $v$, we can finally estimate the bounds $\mu$ and $\nu-$a.e.
\[
   -\dfrac{\tau_2\ep}{\tau_2+\ep} \left[ \log\left(\dfrac{A}{\ep\nu(Y)}\right) - \dfrac{\omega_2-2||c||_\infty}{\ep}\right]\leq
   u^{(c,\tau_2,\ep)} \leq 
   -\dfrac{\tau_2\ep}{\tau_2+\ep} \left[ \log(\mu(X)) + \dfrac{\omega_1-||c||_\infty}{\ep}\right],
\]
\[
   -\dfrac{\tau_1\ep}{\tau_1+\ep} \left[ \log\left(\dfrac{A}{\ep\mu(X)}\right) - \dfrac{\omega_1-2||c||_\infty}{\ep}\right]\leq
   v^{(c,\tau_1,\ep)} \leq 
   -\dfrac{\tau_1\ep}{\tau_1+\ep} \left[ \log(\nu(Y)) + \dfrac{\omega_2-||c||_\infty}{\ep}\right],
\]
which concludes the proof, as the boundary values are independent of the choice of $u$ and $v$ that satisfy \eqref{eq:Dept_sublevel}.
\end{proof}

We are now in position to show the existence of the maximizer in the dual problem \eqref{eq:dualunbalanced}. 

\begin{teo}
Let $\ep,\tau_1, \tau_2>0$ be the positive parameters, $X$ and $Y$ be complete separable metric spaces, $\mu \in \cM_+( X ),~ \nu \in \cM_+( Y )$ be positive measures, and let $c\in L^\infty(X\times Y)$ be a cost function. Then the dual problem \eqref{eq:dualunbalanced}
$$\sup\lbrace D^{\tau_1, \tau_2}_{\ep}(u,v)\suchthat u\in L^\infty( X ), \, v\in L^\infty( Y )\rbrace.$$
admits a maximizer.
\end{teo}

\begin{proof}
First, notice that $D^{\tau_1,\tau_2}_{\ep}(u,v) \leq \tau_1 \mu(X) + \tau_2 \nu(Y)<\infty$. 
Now let $(u_n)_{n\in\N} \subset L^\infty( X ) $ and $(v_n)_{n\in \N} \subset L^\infty( Y )$ be  maximizing sequences for $D^{\tau_1,\tau_2}_{\ep}$.  By Lemma \ref{lemma:increasedual}, we have that $(v_n^{(c,\tau_1,\ep)})_{n\in\N} \subset L^\infty( X )$ and $(u_n^{(c,\tau_2,\ep)})_{n\in\N} \subset L^\infty( Y )$ are also maximizing sequences for $D^{\tau_1,\tau_2}_{\ep}$.

On the other hand, without loss of generality, we can assume that there exists a constant $M>0$ such that $D^{\tau_1,\tau_2}_{\ep}(u_n,v_n)>-M$ for any $n\in \N$. Then, due to Proposition \ref{prop:sublevel_set_transform} the sequences $(u_n^{(c,\tau_2,\ep)})_{n\in\N} \subset L^\infty( Y )$ and $(v_n^{(c,\tau_1,\ep)})_{n\in\N} \subset L^\infty( X )$ are bounded, and by Banach-Alaoglu theorem we can extract a common subsequence $(n_k)_{k\in\N}$ such that $v_n^{(c,\tau_1,\ep)}\rightharpoonup^* \bar{u}$ and $u_n^{(c,\tau_2,\ep)}\rightharpoonup^* \bar{v}$ for some $\bar{u}\in L^\infty(X)$ and $\bar{v}\in L^\infty(Y)$. 

Next, notice that since the maps $t \mapsto e^t$ and $t\mapsto e^{-t}-1$ are continuous and convex functions, by Fatou's Lemma we have
\begin{equation*}
    \begin{cases}
        \liminf\limits_{k\to\infty} \int\limits_{ X \times Y }e^{\frac{v^{(c,\tau_1,\ep)}_{n_k}+u^{(c,\tau_2,\ep)}_{n_k}-c}{\ep}}\d\mu\otimes\nu \geq  \int\limits_{ X \times Y }e^{\frac{\overline{u}+\overline{v}-c}{\ep}}\d\mu\otimes\nu,\\
        \liminf\limits_{k\to\infty}\int\limits_{ X } \left(e^{-\frac{v^{(c,\tau_1,\ep)}_{n_k}}{\tau_1}}-1\right) \d\mu \geq \int\limits_{ X } \left(e^{-\frac{\bar{u}}{\tau_1}}-1\right) \d\mu, \\
        \liminf\limits_{k\to\infty}\int\limits_{ Y } \left(e^{-\frac{u^{(c,\tau_2,\ep)}_{n_k}}{\tau_2}}-1\right) \d\nu \geq \int\limits_{ Y } \left(e^{-\frac{\bar{v}}{\tau_2}}-1\right) \d\nu,
    \end{cases}
\end{equation*}
and as the sequences of transforms also maximize the dual functional, we obtain the chain of inequalities
\begin{align*}
    \sup_{u,v} D^{\tau_1,\tau_2}_{\ep}(u,v) = \lim_{n\to\infty}D^{\tau_1,\tau_2}_{\ep}(v^{(c,\tau_1,\ep)}_n,u^{(c,\tau_2,\ep)}_n) = \limsup\limits_{k\to\infty}D^{\tau_1,\tau_2}_{\ep}(v^{(c,\tau_1,\ep)}_{n_k},u^{(c,\tau_2,\ep)}_{n_k}) \leq D^{\tau_1,\tau_2}_{\ep}(\overline{u},\overline{v}).
\end{align*}

% \begin{align*}
% &\sup_{u,v} D^{\tau_1,\tau_2}_{\ep}(u,v) =\\
% =&\lim_{n\to\infty}\left\lbrace-\tau_1\int\limits_{ X } (e^{-\frac{v_n^{(c,\tau_1,\ep)}}{\tau_1}}-1) \d\mu -\tau_2\int\limits_{ Y } (e^{-\frac{u_n^{(c,\tau_2,\ep)}}{\tau_2}}-1)\d\nu - 
%     \ep \int\limits_{ X \times Y } e^{\frac{v_n^{(c,\tau_1,\ep)}+u_n^{(c,\tau_2,\ep)}-c}{\ep}}\d\mu\otimes\nu \right\rbrace \\
% =&-\liminf_{k\to\infty}\left\lbrace \tau_1\int\limits_{ X } (e^{-\frac{v^{(c,\tau_1,\ep)}_{n_k}}{\tau_1}}-1) \d\mu +\tau_2\int\limits_{ Y } (e^{-\frac{u^{(c,\tau_2,\ep)}_{n_k}}{\tau_2}}-1)\d\nu + 
%     \ep \int\limits_{ X \times Y } e^{\frac{v^{(c,\tau_1,\ep)}_{n_k}+u^{(c,\tau_2,\ep)}_{n_k}-c}{\ep}}\d\mu\otimes\nu \right\rbrace \\
% \leq &-\tau_1\int\limits_{ X } (e^{-\frac{\bar{u}}{\tau_1}}-1) \d\mu -\tau_2\int\limits_{ Y } (e^{-\frac{\bar{v}}{\tau_2}}-1)\d\nu - \ep \int\limits_{ X \times Y } e^{\frac{\bar{u}+\bar{v}-c}{\ep}}\d\mu\otimes\nu = D^{\tau_1,\tau_2}_{\ep}(\overline{u},\overline{v}).
% \end{align*}

Thus, $(\overline{u},\overline{v})$ is a maximizer for $D^{\tau_1,\tau_2}_{\ep}$, since both $\overline{u} \in L^\infty( X )$ and $ \overline{v} \in L^\infty( Y )$ are bounded by construction. Finally, the strict concavity of $D^{\tau_1,\tau_2}_{\ep}$ and Lemma \ref{lemma:increasedual} imply that the maximizer is unique and, in particular, $\overline{v} = \overline{u}^{(c,\tau_2, \ep)}$ and $\overline{u} = \overline{v}^{(c,\tau_1, \ep)}$.
\end{proof}

In the remaining part of this section, we will focus on establishing the duality between the primal \eqref{eq:UOT} and dual problems \eqref{eq:dualunbalanced} as well as on showing the existence of the minimizer in \eqref{eq:UOT}, following a similar strategy employed in \cite{DMaGer19}.

\begin{proposition}\label{prop:primaldual}
Let $\ep,\tau_1, \tau_2>0$ be the positive parameters, $X$ and $Y$ be complete separable metric spaces, $\mu \in \cM_+( X ),~ \nu \in \cM_+( Y )$ be positive measures, and let $c\in L^\infty(X\times Y)$ be a cost function. Then for every $\gamma\in\cM_+( X \times Y )$, $u\in L^\infty( X )$, $v\in L^\infty( Y )$ holds 
\begin{equation}\label{eq:primaldual}
\int\limits_{ X \times Y }c(x,y)d\gamma +\ep S(\gamma)+ \tau_1\KL((e_1)_{\sharp}\gamma|\mu) + \tau_2\KL((e_2)_{\sharp}\gamma|\nu) \geq D^{\tau_1,\tau_2}_{\ep}(u,v).
\end{equation}
\end{proposition}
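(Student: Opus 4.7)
The plan is to establish \eqref{eq:primaldual} by applying the Fenchel--Young inequality to each of the three convex pieces on the left-hand side, matched against the corresponding exponential term on the right. The fundamental building block is the scalar inequality $ab \leq a(\log a - 1) + e^{b}$, valid for all $a\geq 0$ and $b\in\R$, which expresses that $e^{b}$ is the Legendre transform of $a(\log a-1)$ (with the convention $0\log 0=0$).

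First, I would dispose of the degenerate cases: if $\gamma \not\ll \mu\otimes\nu$, the Shannon entropy $S(\gamma)$ is $+\infty$ and the inequality is trivial; if one of the marginal absolute-continuity conditions fails, the corresponding $\KL$ term is $+\infty$ and again the inequality is trivial. So assume $\gamma \ll \mu\otimes\nu$, set $h = \tfrac{d\gamma}{d(\mu\otimes\nu)}$, and let $h_1 = \tfrac{d(e_1)_\sharp\gamma}{d\mu}$, $h_2 = \tfrac{d(e_2)_\sharp\gamma}{d\nu}$ be the marginal densities, which then exist $\mu$- and $\nu$-a.e.

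Next, I would apply the Fenchel--Young inequality three times. Taking $a=h_1$ and $b=-u/\tau_1$, multiplying by $\tau_1$ and integrating against $\mu$ yields
\begin{equation*}
\tau_1\KL((e_1)_\sharp\gamma|\mu) \;\geq\; -\!\int_X u\, d(e_1)_\sharp\gamma \;-\; \tau_1\!\int_X (e^{-u/\tau_1}-1)\,d\mu,
\end{equation*}
and symmetrically for the $v$-term with $\nu$. Taking $a=h$ and $b=(u+v-c)/\ep$, multiplying by $\ep$ and integrating against $\mu\otimes\nu$ produces
\begin{equation*}
\ep S(\gamma) \;\geq\; \int_{X\times Y}(u+v-c)\,d\gamma \;-\; \ep\!\int_{X\times Y} e^{(u+v-c)/\ep}\,d\mu\otimes\nu.
\end{equation*}
Using $\int_X u\, d(e_1)_\sharp\gamma = \int_{X\times Y} u(x)\,d\gamma$ and similarly for $v$, the three inequalities can then be added to the term $\int c\,d\gamma$.

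The decisive observation, and the only nontrivial bookkeeping, is that when the three bounds are summed the terms $\int c\,d\gamma$, $\int u\,d\gamma$ and $\int v\,d\gamma$ cancel exactly, leaving precisely $D^{\tau_1,\tau_2}_\ep(u,v)$ on the right. There is no real analytic obstacle: the argument is purely algebraic once the Fenchel--Young inequality is in place, and the boundedness of $u,v,c$ together with $\gamma \ll \mu\otimes\nu$ ensures every integral is well defined (the negative parts of the integrands are uniformly bounded in $L^1$). The only point to watch is the sign convention in the $\KL$ functional \eqref{eq:classicalKL}, where the extra ``$+1$'' term under $\mu$, $\nu$ is what produces the ``$-1$'' inside the $e^{-u/\tau_1}-1$ and $e^{-v/\tau_2}-1$ factors of the dual functional.
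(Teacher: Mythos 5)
Your proof is correct and follows essentially the same route as the paper's: both apply the Fenchel--Young inequality $ab \leq a(\log a - 1) + e^{b}$ (the paper writes it as $e^t + s(\log s - 1) \geq ts$) term by term to the three entropy-type pieces, integrate against the respective reference measures, and observe the exact cancellation of $\int c\,d\gamma$, $\int u\,d\gamma$, $\int v\,d\gamma$. The only cosmetic difference is the direction of presentation (you bound the primal from below; the paper bounds the dual from above) and your explicit, though partly redundant, treatment of the degenerate cases --- note that $\gamma\ll\mu\otimes\nu$ already forces the marginals to be absolutely continuous, so the second degenerate case you mention never arises once the first is excluded.
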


\begin{proof}
By definition of the Shannon entropy in \eqref{eq:shannon_entropy}, we can assume that $\gamma$ is a positive measure and absolutely continuous with respect to $\mu\otimes \nu$. We denote by $p>0$ its density. Then, notice that $(e_1)_\sharp \gamma$ and $(e_2)_\sharp \gamma$ have densities $p_X$ and $p_Y$ with respect to $\mu$ and $\nu$, where
\[
p_X = \int_Y p \d\nu, \quad\quad \text{and} \quad\quad  p_Y = \int_X p \d\mu.
\]

Next, for the exponential terms we can use the Fenchel-Young inequality $e^t+s(\log s-1)\geq ts$, and for all $u\in L^\infty( X )$ and $v\in L^\infty( Y )$
\begin{align*}
D^{\tau_1,\tau_2}_{\ep}(u,v) =-&\tau_1\int\limits_{ X } (e^{-\frac{u}{\tau_1}}-1) \d\mu -\tau_2\int\limits_{ Y } (e^{-\frac{v}{\tau_2}}-1)\d\nu - 
    \ep \int\limits_{ X \times Y } e^{\frac{u+v-c}{\ep}}\d\mu\otimes\nu\leq \\
\leq &\tau_1\int\limits_X \left(\frac{u}{\tau_1}p_X +p_X(\log(p_X)-1)  +1\right)\d\mu  +\\
+&\tau_2\int\limits_Y \left(\frac{v}{\tau_2}p_Y +p_Y(\log(p_Y)-1)  +1\right)\d\nu + \\
+&\ep\int\limits_{X\times Y}\left(p(\log(p)-1) - \frac{u+v-c}{\ep}p\right)\d\mu\otimes\nu \leq\\
\leq &\tau_1\KL((e_1)_{\sharp}\gamma|\mu) + \tau_2\KL((e_2)_{\sharp}\gamma|\nu)  +\ep S(\gamma) +\ep\int_{ X \times Y }c\mathrm{d}\gamma,
\end{align*}
which completes the proof.
\end{proof}

\begin{proposition}[Characterization of \eqref{eq:UOTclassical}]\label{prop:equiv_comp}
Let $\ep,\tau_1, \tau_2>0$ be the positive parameters, $X$ and $Y$ be complete separable metric spaces, $\mu \in \cM_+( X ),~ \nu \in \cM_+( Y )$ be positive measures, and let $c\in L^\infty(X\times Y)$ be a cost function. Then the following are equivalent:
\begin{enumerate}
\item[(a)] \emph{(Maximizers)} $u^*$ and $v^*$ are maximizing potentials for \eqref{eq:dualunbalanced};
\item[(b)] \emph{(Maximality condition)} $(u^*)^{(c,\tau_2,\ep)}=v^*$ and $(v^*)^{(c,\tau_1,\ep)}=u^*$;
\item[(c)] \emph{(Duality attainment) } $\cO\cT^{\tau_1,\tau_2}_{\ep}(\mu,\nu) = D^{\tau_1,\tau_2}_{\ep}(u^*,v^*).$
\end{enumerate}
Moreover, $\gamma^*$ defined as $\d\gamma^*=\exp\left(\frac{u^*(x)+v^*(y)-c(x,y)}{\ep}\right)\d\mu\otimes\nu$ is the (unique) minimizer for the problem \eqref{eq:UOTclassical}.
\end{proposition}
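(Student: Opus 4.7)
The plan is to establish the cycle (a) $\Rightarrow$ (b) $\Rightarrow$ (c) $\Rightarrow$ (a), and then verify that the explicit $\gamma^*$ is a primal minimizer, with uniqueness following from strict convexity. The implication (a) $\Rightarrow$ (b) follows directly from Lemma \ref{lemma:increasedual}: if $(u^*,v^*)$ maximizes $D^{\tau_1,\tau_2}_{\ep}$, then the upper bound \eqref{eq:Dutransform} together with the equality case \eqref{eq:optcond} force $v^* = (u^*)^{(c,\tau_2,\ep)}$, and the symmetric identities \eqref{eq:Dvtransform}--\eqref{eq:optcond2} give $u^* = (v^*)^{(c,\tau_1,\ep)}$. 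The implication (c) $\Rightarrow$ (a) is immediate from the weak duality of Proposition \ref{prop:primaldual}: attainment of the primal infimum by a specific pair $(u^*,v^*)$ means that pair dominates every other dual admissible one.

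The heart of the proof is (b) $\Rightarrow$ (c), which I would handle by first computing the marginals of $\gamma^*$ and then matching terms. Using $u^* = (v^*)^{(c,\tau_1,\ep)}$, i.e.\ $\int_Y \exp((v^*-c)/\ep)\d\nu = \exp(-(\tau_1+\ep)u^*/(\tau_1\ep))$, one checks that
\begin{equation*}
\frac{\d(e_1)_\sharp \gamma^*}{\d\mu}(x) = \exp\!\left(\frac{u^*(x)}{\ep}\right)\int_Y\!\exp\!\left(\frac{v^*(y)-c(x,y)}{\ep}\right)\d\nu(y) = \exp(-u^*(x)/\tau_1),
\end{equation*}
and symmetrically $\d(e_2)_\sharp \gamma^*/\d\nu = \exp(-v^*/\tau_2)$. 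Plugging $\gamma^*$ into the primal objective, the Shannon term expands as $\int(u^*+v^*-c)\d\gamma^* - \ep\int\exp((u^*+v^*-c)/\ep)\d\mu\otimes\nu$; the cost contributes $+\int c\,\d\gamma^*$, cancelling the $-c$ Shannon cross-term; and each $\tau_i\KL$ term contributes a potential piece like $-\int u^*\d\gamma^*$ (cancelling the Shannon $u^*$ cross-term) together with $-\tau_i\int\exp(-u^*/\tau_i)\d\mu + \tau_i\mu(X)$. After all cancellations one is left with exactly $D^{\tau_1,\tau_2}_\ep(u^*,v^*)$; combined with Proposition \ref{prop:primaldual} this both proves (c) and certifies $\gamma^*$ as a minimizer of the primal problem.

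Uniqueness of $\gamma^*$ follows from strict convexity of the primal objective in $\gamma$: the Shannon entropy $S$ and both $\KL$ divergences are strictly convex, while the cost integral is linear, so the sum admits at most one minimizer. The main obstacle is the bookkeeping in (b) $\Rightarrow$ (c): the prefactor $\tau_i\ep/(\tau_i+\ep)$ appearing in the $(c,\tau,\ep)$-transform must collapse precisely to produce the clean marginal densities $\exp(-u^*/\tau_i)$, and one must verify that the $\int u^*\d\gamma^*$ and $\int v^*\d\gamma^*$ cross-terms from the Shannon entropy exactly annihilate their counterparts coming from the KL marginals. Neither cancellation is deep, but both have to be tracked honestly through Fubini and the explicit exponential identities.
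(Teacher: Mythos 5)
Your argument is correct and follows the same cyclic structure (a)$\Rightarrow$(b)$\Rightarrow$(c)$\Rightarrow$(a) as the paper; in fact you usefully fill in the computation the paper only asserts (``one can directly show''), by first deriving the marginal identities $\d(e_1)_\sharp\gamma^*/\d\mu = e^{-u^*/\tau_1}$ and $\d(e_2)_\sharp\gamma^*/\d\nu = e^{-v^*/\tau_2}$ from the fixed-point relations $(v^*)^{(c,\tau_1,\ep)}=u^*$, $(u^*)^{(c,\tau_2,\ep)}=v^*$, and then tracking the exact cancellation of the $\int u^*\d\gamma^*$, $\int v^*\d\gamma^*$ and $\int c\,\d\gamma^*$ cross-terms to land on $D^{\tau_1,\tau_2}_\ep(u^*,v^*)$. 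One small imprecision: $\gamma\mapsto\KL((e_i)_\sharp\gamma\,|\,\cdot)$ is only convex, not strictly convex (it is invariant along the kernel of the marginal maps), but the strict convexity of the Shannon term alone already yields uniqueness of $\gamma^*$.
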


\begin{proof}
Assume that $u^*$ and $v^*$ are maximizers of \eqref{eq:dualunbalanced}. We are going to prove that $v^*=(u^*)^{(c,\tau_2,\ep)}$. Due to Lemma \ref{lemma:increasedual}, we have $D^{\tau_1,\tau_2}_{\ep} (u^*, (u^*)^{(c,\tau_2,\ep)}) \geq D^{\tau_1,\tau_2}_{\ep}(u^*,v^*)$;  however, by the maximality of $u^*,v^*$ we have also $D^{\tau_1,\tau_2}_{\ep}(u^*,v^*) \geq D^{\tau_1,\tau_2}_{\ep} (u^*, (u^*)^{(c,\tau_2,\ep)}) $. These imply that $D^{\tau_1,\tau_2}_{\ep}(u^*,(u^*)^{(c,\tau_2,\ep)})=D^{\tau_1,\tau_2}_{\ep}(u^*,v^*)$. Therefore, by \eqref{eq:optcond} , $v^*=(u^*)^{(c,\tau_2,\ep)}$.  A similar argument also shows that $u^*=(v^*)^{(c,\tau_1,\ep)}$.  

Now, we assume that $u^*$ and $v^*$ are functions such that $v^*=(u^*)^{(c,\tau_2,\ep)}$ and $u^*=(v^*)^{(c,\tau_1,\ep)}$. We will prove that $\cO\cT^{\tau_1,\tau_2}_{\ep}(\mu,\nu) = D^{\tau_1,\tau_2}_{\ep}(u^*,v^*)$. For simplicity, we denote $F^{\tau_1,\tau_2}_{\ep}(\gamma)$ by
\[
F^{\tau_1,\tau_2}_{\ep}(\gamma) = \int\limits_{ X \times Y }c \d\gamma +\ep S(\gamma)+ \tau_1\KL((e_1)_{\sharp}\gamma|\mu) + \tau_2\KL((e_2)_{\sharp}\gamma|\nu).
\]
Let us define $\d\gamma^* = \exp(\frac{1}{\ep}\left(u^*+v^*-c\right))\d\mu\otimes\nu$. Due to Proposition \ref{prop:primaldual}, we have that $F^{\tau_1,\tau_2}_{\ep}(\gamma^*) \geq D^{\tau_1,\tau_2}_{\ep}(u,v)$, $\forall u \in L^\infty( X ), v \in L^\infty( Y )$ and $F^{\tau_1,\tau_2}_{\ep}(\gamma) \geq D^{\tau_1,\tau_2}_{\ep} ( u^*,v^*)$, for all $\gamma \in \mathcal{M}_{+}(X\times Y)$. Moreover, exploiting the fact that $u^*=v^{(c,\tau_1,\ep)}$ and $v^*=u^{(c,\tau_2,\ep)}$ one can directly show that $F^{\tau_1,\tau_2}_{\ep}(\gamma^*) = D^{\tau_1,\tau_2}_{\ep} ( u^*,v^*)$.

Using the above inequalities, we conclude that
$$F^{\tau_1,\tau_2}_{\ep}(\gamma) \geq  D^{\tau_1,\tau_2}_{\ep}( u^*,v^*) = F^{\tau_1,\tau_2}_{\ep}(\gamma^*) \geq  D^{\tau_1,\tau_2}_{\ep} ( u,v).$$
Notice that the inequality $F^{\tau_1,\tau_2}_{\ep}(\gamma) \geq F^{\tau_1,\tau_2}_{\ep}(\gamma^*)$ grant us that $\gamma^*$ is a minimizer for the primal problem \eqref{eq:UOTclassical} and that, in particular, we have $\cO\cT^{\tau_1,\tau_2}_{\ep}(\mu,\nu) = D^{\tau_1,\tau_2}_{\ep}(u^*,v^*)$.

Finally, we assume that $\cO\cT^{\tau_1,\tau_2}_{\ep}(\mu,\nu) = D^{\tau_1,\tau_2}_{\ep}(u^*,v^*)$ holds for some $u^*$ and $v^*$. We want to show that $u^*$ and $v^*$ are maximizers of the dual problem \eqref{eq:dualunbalanced}. Taking the minimum over $\gamma\in\mathcal{M}_{+}( X \times Y )$ in \eqref{eq:primaldual}, for any $u\in L^{\infty}(X)$ and $v\in L^\infty(Y)$ we have that
\[
\cO\cT^{\tau_1,\tau_2}_{\ep}(\mu,\nu) \geq D^{\tau_1,\tau_2}_{\ep}(u,v).
\]
By hypothesis $\cO\cT^{\tau_1,\tau_2}_{\ep}(\mu,\nu) = D^{\tau_1,\tau_2}_{\ep}(u^*,v^*)$. Then, we conclude that $D^{\tau_1,\tau_2}_{\ep}(u^*,v^*)\geq D^{\tau_1,\tau_2}_{\ep}(u,v),$ for all $u,v\in L^\infty$. So, $u^*$ and $v^*$ are maximizers of the dual problem.
\end{proof}

\section{Unbalanced Non-commutative Optimal Transport}\label{sec:unbalancedQOT}

Let $\cH_1$ and $\cH_2$ be Hilbert spaces with dimensions, respectively, $d_1$ and $d_2$, $C\in \rmH(\cH_1\otimes\cH_2)$ be a Hermitian operator on $\cH_1\otimes\cH_2$, $\rho\in\SDP(\cH_1)$ and $\sigma\in\SDP(\cH_2)$ be Hermitian and semi-definite positive operators on $\cH_1$ and $\cH_2$ respectively. 

Given $\ep,~\tau_1,~\tau_2>0$, define the functional $\Fept:\SDP(\cH_1\otimes \cH_2)
\to \R$ as
\begin{equation}
    \Fept(\Gamma) = \Tr[C\Gamma] + \ep\cS[\Gamma] + \tau_1\cE[\Gamma_1|\rho] + \tau_2\cE[\Gamma_2|\sigma],
\end{equation}
where $\cS[\Gamma]$ is the von Neumann Entropy 
\[
\cS:\SDP(\cH_1\otimes\cH_2)\to \R, \quad \quad \cS[\Gamma] = \Tr[\Gamma(\log\Gamma-\Id)],
\]
the matrices $\Gamma_1,\Gamma_2$ are respectively, the partial traces of $\Gamma_1 = \Tr_2[\Gamma]$, $\Gamma_2 = \Tr_1[\Gamma]$ in $\cH_2$ and $\cH_1$ and the functional $\cE[\cdot|\cdot]$ is the quantum (Umegaki) relative entropy 
\[
\cE:\SDP(\cH)\times \SDP(\cH)\to \R, \quad \cE[\rho_1|\rho_2] = 
	\begin{cases}
		\Tr[\rho_1(\log\rho_1 - \log\rho_2 - \Id)+\rho_2] &\text{if } \ker \rho_1 \subset \ker \rho_2, \\
		+\infty &\text{otherwise}
	\end{cases}
\]
between $\Gamma_1$ (resp. $\Gamma_2)$ and $\rho$ (resp. $\sigma$).

The von Neumann entropy regularized Unbalanced Non-commutative Optimal Transport is given by 
\begin{equation}\label{eq:UQOT}
\UQOT[\rho,\sigma] = \inf \left\lbrace \Fept(\Gamma) \suchthat \Gamma \in \SDP(\cH_1\otimes \cH_2) \right\rbrace.
\end{equation}

%\begin{rem}\label{entr:bbdfrombelow}
%Note that the defined above von Neumann entropy and the relative entropy are bounded from below. Indeed, using the fact that $x\log x\geq x-1$ we obtain $\cS[\Gamma]\geq -\Tr[\Id]$, as well as $\cE[\rho_1|\rho_2]\geq 0$ due to Klein's inequality.
% \begin{equation*}
%     \Tr[ \rho_1(\log \rho_1 - \log \rho_2) - \rho_1+\rho_2]\geq 0.
% \end{equation*}

% more precisely , and $\cE[\rho_1|\rho_2]\geq 0$ due to Klein's inequality with the fact that $f(x)=x\log x $ is operator convex.

% \nat{Here I can put some little computation verifying the latter, because I didn't find anywhere the definition for $\cE[\rho|\sigma]$ that I use, but I verified that it's nonnegative.}
%\end{rem}

% \augusto{Good idea to write a proof or cite a result.}

\subsection{Weak duality}

%via Fenchel-Rockafellar theorem (see Theorem \ref{thm:Frenchel-Rock}).\nat{Despite working in finite dimensions, we will keep the definitions of all the objects in the generality of compact and trace class operators.}
% Let us denote $\cK(\cH)$ the space of compact operators over a Hilbert space $\cH$, and its Hermitian elements by $\rmH(\cH)\subset \cK(\cH)$. 
Let $\cH_1,~\cH_2$ be finite-dimensional Hilbert spaces with dimensions $d_1$ and $d_2$, respectively. Consider the dual functional $\Dept:\rmH(\cH_1)\times \rmH(\cH_2)\to \R$ defined by
\begin{equation}\label{eq:dualfunctional}
\Dept(U,V) = -\tau_1\Tr[(\exp\left(-\frac{U}{\tau_1}\right)-\Id)\rho]-\tau_2\Tr[(\exp\left(-\frac{V}{\tau_2}\right)-\Id)\sigma] - \varepsilon\Tr[\exp\left(\frac{U\oplus V-C}{\ep}\right)],
\end{equation}
where $U\oplus V = U\otimes \Id + \Id\otimes V$ denotes the Kronecker sum of $U$ and $V$. 

In the following, similarly to the Proposition \ref{prop:primaldual} for the classical case, we will derive the non-commutative weak duality formulation of~\eqref{eq:UQOT} using the Fenchel-Young inequality, which we will restrict to Hermitian elements $\rmH(\cH)\subset\rmB(\cH)$, instead of considering compact and trace class operators, which in the finite-dimensional case are the same as $\rmB(\cH)$.

\begin{deff}[Convex conjugate]
   Let $\cH$ be a $d$-dimensional Hilbert space and let $\rmH(\cH)\subset\rmB(\cH)$ be the space of Hermitian operators on $\cH$. Let $\Xi : \rmH(\cH) \to \R \cup \{+\infty\}$ be a proper function. Then the convex conjugate of $\Xi$ is a function $\Xi^* : \rmH(\cH) \to \R \cup \{+\infty\}$ defined as
   \[
       \Xi^*(B) = \sup_{A \in \rmH(\cH)} 
       \{ 
           \langle B, A \rangle - \Xi(A)
       \} = 
       \sup_{A \in \rmH(\cH)} 
       \{ 
            \Tr[AB] - \Xi(A)
       \},
   \]
\end{deff}

Notice that the inequality $\Xi(A)+\Xi^*(B)\geq \Tr[AB]$ holds for any $A,~B\in \rmH(\cH)$. %$X^*$ is a dual space for space $X$.

Before we proceed to prove the weak duality, we will need the following proposition.

\begin{proposition}\label{prop:trace_convex_prod}
Let $\cH$ be a $d$-dimensional Hilbert space, let $A\in\rmH(\cH)$ be a Hermitian operator, and let $\rho\in\SDP(\cH)$ be a semi-definite positive Hermitian operator. Let $f:\R\to\R_+$ be a nonnegative convex function. Then the map $A\in\rmH(\cH)\mapsto \Tr[f(A)\rho]\in\R$ is convex, and it is strictly convex when $f$ is, and $\rho$ is definite positive.
\end{proposition}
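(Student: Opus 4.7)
The plan is to reduce the proposition to the case where $\rho$ is a rank-one projector via the spectral theorem, and then establish convexity for this reduced problem by combining the spectral decomposition of the convex combination $A_t$ with scalar Jensen's inequality.

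First, I would spectrally decompose $\rho = \sum_j \mu_j |\chi_j\rangle\langle\chi_j|$ with $\mu_j \geq 0$, so that
\[
\Tr[f(A)\rho] = \sum_j \mu_j \langle \chi_j | f(A) | \chi_j \rangle.
\]
Since a nonnegative linear combination of convex functions is convex, it suffices to show that $A \mapsto \langle \chi | f(A) | \chi \rangle$ is convex for an arbitrary unit vector $\chi \in \cH$.

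For the rank-one case, I would fix $A_0, A_1 \in \rmH(\cH)$, $t \in [0,1]$, set $A_t = (1-t)A_0 + tA_1$, and write $A_t = \sum_k \gamma_k |c_k\rangle\langle c_k|$ in spectral form. Then
\[
\langle \chi | f(A_t) | \chi \rangle = \sum_k f(\gamma_k)\,|\langle \chi | c_k\rangle|^2,
\]
and since $\gamma_k = \langle c_k | A_t | c_k\rangle = (1-t)\langle c_k | A_0 | c_k\rangle + t\langle c_k | A_1 | c_k\rangle$ is an affine combination of two real numbers, scalar convexity of $f$ bounds each $f(\gamma_k)$ termwise by the corresponding convex combination of $f(\langle c_k | A_0 | c_k\rangle)$ and $f(\langle c_k | A_1 | c_k\rangle)$.

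The main obstacle is then to close the argument with the Peierls-type inequality
\[
\sum_k f(\langle c_k | B | c_k\rangle)\,|\langle\chi|c_k\rangle|^2 \leq \langle \chi | f(B) | \chi \rangle \qquad\text{for } B \in \{A_0, A_1\}.
\]
Expanding $B = \sum_i \lambda_i |b_i\rangle\langle b_i|$ in its own spectral basis and setting $U_{ki} = \langle c_k | b_i\rangle$, the inner expression $\langle c_k | B | c_k\rangle = \sum_i |U_{ki}|^2 \lambda_i$ is a convex combination, so Jensen's inequality gives $f(\langle c_k | B | c_k\rangle) \leq \sum_i |U_{ki}|^2 f(\lambda_i)$. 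Combining this with the unitarity of $(U_{ki})$ and the two expansions of $\chi$ in the bases $\{c_k\}$ and $\{b_i\}$ is what finally produces $\langle \chi | f(B) | \chi \rangle = \sum_i f(\lambda_i)|\langle \chi|b_i\rangle|^2$ as the upper bound; this matching of the two weighted spectral sums is the delicate step.

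Finally, for strict convexity, I would assume $f$ is strictly convex and $\rho$ is positive definite so that every $\mu_j$ is strictly positive. Each Jensen step above then becomes a strict inequality unless $\langle c_k | A_0 | c_k\rangle = \langle c_k | A_1 | c_k\rangle$ for every $k$, which together with the corresponding equality case in the Peierls step forces $A_0 = A_1$, completing the argument.
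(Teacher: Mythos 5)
Your reduction to rank-one $\rho = \ket{\chi}\bra{\chi}$ is valid and is actually a different organisation than the paper, which works with general $\rho$ throughout. Your first Jensen step, using $\gamma_k = (1-t)\bra{c_k}A_0\ket{c_k}+t\bra{c_k}A_1\ket{c_k}$, is also correct. The gap is the ``Peierls-type inequality''
\[
\sum_k f\bigl(\bra{c_k}B\ket{c_k}\bigr)\,|\langle\chi|c_k\rangle|^2 \ \leq\ \bra{\chi}f(B)\ket{\chi},
\]
which does \emph{not} hold in general. After scalar Jensen you obtain the upper bound $\sum_i f(\lambda_i)\bigl(\sum_k|U_{ki}|^2|\langle\chi|c_k\rangle|^2\bigr)$, but the weight $\sum_k|U_{ki}|^2|\langle\chi|c_k\rangle|^2$ is an incoherent overlap, whereas $|\langle\chi|b_i\rangle|^2 = \bigl|\sum_k\langle\chi|c_k\rangle U_{ki}\bigr|^2$ is a coherent one; both are probability distributions in $i$, but there is no termwise comparison between them, so the ``matching of the two weighted spectral sums'' you describe does not occur. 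Concretely, take $d=2$, $A_t=0$ so that $\{c_k\}=\{e_1,e_2\}$, $B=A_0=\bigl(\begin{smallmatrix}0&1\\1&0\end{smallmatrix}\bigr)$, $\chi=\tfrac{1}{\sqrt 2}(1,1)$, and $f(x)=(x-1)^2$. Then $\bra{c_k}B\ket{c_k}=0$ for both $k$, so the left side is $f(0)=1$; but $\chi$ is a unit eigenvector of $B$ with eigenvalue $1$, so the right side is $f(1)=0$, and the claimed inequality fails.

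You should also be aware that the paper's own proof is vulnerable at the analogous point: after operator Jensen what remains is the pinching estimate $\sum_i\bra{\phi_i}f(A)\ket{\phi_i}\bra{\phi_i}\rho\ket{\phi_i} \leq \Tr[f(A)\rho]$, and the justification given (that $f(A)\ket{\phi_k}\bra{\phi_k}\rho$ is semi-definite positive) is not correct, since a product of three positive operators need not even be self-adjoint. More fundamentally, convexity of $A\mapsto\Tr[f(A)\rho]$ for \emph{every} $\rho\in\SDP(\cH)$ is equivalent, by specialising to $\rho=\ket{\chi}\bra{\chi}$ and varying $\chi$, to $f\bigl(\tfrac{A+B}{2}\bigr)\leq\tfrac{f(A)+f(B)}{2}$ in the Loewner order, i.e.\ to $f$ being \emph{operator} convex. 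Since not every nonnegative convex function on $\R$ is operator convex (e.g.\ $e^{-x}$, precisely the case used later in the paper), neither your Peierls step nor the paper's pinching step can be repaired under the stated hypotheses; an additional assumption on $f$ is needed.
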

\begin{proof}
Let $t\in(0,1)$ and let $A,B\in\rmH(\cH)$ be Hermitian. Consider the orthonormal basis $\{\ket{\phi_i}\}_{i=1}^d$ that diagonalizes $tA+(1-t)B$ 
% and another orthonormal basis $\{\ket{\xi_j}\}_{j=1}^d$ for $\rho$ 
and write $f(tA+(1-t)B)$ as 
\[
%\rho = \sum_{j=1}^d\rho_j\ket{\xi_j}\bra{\xi_j},\quad\quad\quad 
f(tA+(1-t)B) = \sum_{i=1}^d f(\bra{\phi_i}tA+(1-t)B\ket{\phi_i})\ket{\phi_i}\bra{\phi_i}.
\]
We can compute the $\Tr[f(tA+(1-t)B)\rho] = \Tr[\rho\, f(tA+(1-t)B)]$ directly as
\begin{align*}
    \Tr[f(tA+(1-t)B)\rho]  =&\sum_{i=1}^d \bra{\phi_i}\rho \,f(tA+(1-t)B)\ket{\phi_i}=\\
    =& \sum_{i=1}^d f(\bra{\phi_i}tA+(1-t)B\ket{\phi_i}) \bra{\phi_i}\rho\ket{\phi_i} ,
\end{align*}
which due to the convexity of $f$ can be bounded from above by
\begin{align*}
    \leq t\sum_{i=1}^d f(\bra{\phi_i}A\ket{\phi_i}) \bra{\phi_i}\rho\ket{\phi_i} +(1-t)  \sum_{i=1}^d f(\bra{\phi_i}B\ket{\phi_i}) \bra{\phi_i}\rho\ket{\phi_i},
\end{align*}
and the inequality is strict if $f$ is strictly convex and $\rho$ is definite positive.

Now, see that due to Operator Jensen's Inequality (e.g., Theorem 1.2 in \cite{PecFurMicSeo05}) we can use that $f(\bra{\phi_i}\cdot \ket{\phi_i})\leq \bra{\phi_i}f(\cdot)\ket{\phi_i}$, and thus
\begin{align*}
    \Tr[f(tA+(1-t)B)\rho] \leq & t\sum_{i=1}^d \bra{\phi_i}f(A)\ket{\phi_i} \bra{\phi_i}\rho\ket{\phi_i} +(1-t)  \sum_{i=1}^d \bra{\phi_i}f(B)\ket{\phi_i}\bra{\phi_i}\rho\ket{\phi_i}.
\end{align*}

On the other hand, due to the fact that $f\geq 0$ and $\rho\geq 0$, the operator $f(\cdot)\ket{\phi_k}\bra{\phi_k}\rho$ is semi-definite positive for any $k=1,\dots, d$, and in particular, since $\sum_{k=1}^d \ket{\phi_k}\bra{\phi_k}=\Id$, we have
\[\bra{\phi_i}f(\cdot)\ket{\phi_i} \bra{\phi_i}\rho\ket{\phi_i} \leq \bra{\phi_i}f(\cdot)\left(\sum_{k=1}^d \ket{\phi_k} \bra{\phi_k}\right)\rho\ket{\phi_i} = \bra{\phi_i}f(\cdot)\rho\ket{\phi_i},\]
which finally yields
\begin{align*}
    \Tr[f(tA+(1-t)B)\rho] \leq & t\sum_{i=1}^d \bra{\phi_i}f(A)\rho\ket{\phi_i} +(1-t)  \sum_{i=1}^d \bra{\phi_i}f(B)\rho\ket{\phi_i} = \\
    =& t\Tr[f(A)\rho] +(1-t)\Tr[f(B)\rho],
\end{align*}
where, as mentioned before, the inequality is strict for $f$ strictly convex and $\rho\in\DP(\cH)$. 
\end{proof}

\begin{teo}\label{thm:UQOT_weakdual}
Let $\varepsilon,\tau_1,\tau_2>0$ be positive numbers, $\cH_1$ and $\cH_2$ be finite-dimensional Hilbert spaces with dimensions, respectively, $d_1$ and $d_2$. Let $C\in \rmH(\cH_1\otimes\cH_2)$ be a Hermitian operator on $\cH_1\otimes\cH_2$, $\rho\in\DP(\cH_1),~\sigma\in\DP(\cH_2)$ be Hermitian definite positive operators on $\cH_1$ and $\cH_2$. Then,
\begin{equation}\label{eq:dualpbunb}
\UQOT[\rho,\sigma] = \inf_{\Gamma\in\SDP(\cH_1\otimes\cH_2)} \lbrace\Fept(\Gamma)\rbrace \geq \sup_{\substack{U \in \rmH(\cH_1)\\ V\in \rmH(\cH_2)}}\lbrace \Dept(U,V)  \rbrace, 
\end{equation}
where $\Dept:\rmH(\cH_1)\times \rmH(\cH_2)\to \R$ is the Kantorovich dual functional defined in \eqref{eq:dualfunctional}.
\end{teo}

\begin{proof}
Define the functions $\Xi:\rmH(\cH_1)\times \rmH(\cH_2)\to\R$ and $\Theta:\rmH(\cH_1\otimes\cH_2)\to\R$ by
\begin{equation}\label{Xi_U_V}
    \Xi(U,V) = \tau_1\Tr[(\exp\left(-\frac{U}{\tau_1}\right)-\Id)\rho]+\tau_2\Tr[(\exp\left(-\frac{V}{\tau_2}\right)-\Id)\sigma],
\end{equation}
\begin{equation}\label{Theta_Z}
    \Theta(Z) =
        \varepsilon \Tr[\exp\left(\frac{-Z-C}{\varepsilon}\right)],
\end{equation}
%Indeed, consider the basis decompositions $X = \sum_{i} X_i|\chi_i\rangle\langle\chi_i|$, $Y = \sum_{j} y_j|\xi_j\rangle\langle\xi_j|$ for $X$ and $Y$ compact operators, then for $Y$ positive
% \begin{equation*}
%     \Tr[(\exp(-\frac{X}{\tau})-\Id)Y] = \sum_{ij} (\exp(-\frac{x_i}{\tau})-1)y_j |\langle \chi_i|\xi_j\rangle|^2
% \end{equation*}
% is proper convex function in $X$, and similarly for any bounded $Y$
% \begin{equation*}
%     \Tr[\exp(\frac{-X-Y}{\ep})] = \sum_{ij} \exp(\frac{-x_i-y_j}{\ep}) |\langle \chi_i|\xi_j\rangle|^2
% \end{equation*}
% is a proper convex.}
% \augusto{Here we just need to cite Carlen. It is a proper function.}
% \nat{I just realized that our spaces are $\cH_1$ and $\cH_2$, and we also define the operator $A$, so we probably should modify some notation}
and define the operator A by
\begin{equation}\label{oper:A}
    A: 
    \begin{matrix}
    \rmH(\cH_1)\times \rmH(\cH_2) & \to &   \rmH(\cH_1\otimes\cH_2) \\
        (U, V)  &\mapsto  &-(U\oplus V).
    \end{matrix}
\end{equation}

Using the latter notation, the dual functional $\Dept$ defined in \eqref{eq:dualfunctional} can be expressed in terms of $\Xi$ and $\Theta$  as 
\begin{equation*}
    \Dept(U, V)=- \Xi(U,V)-\Theta(A(U,V)).
\end{equation*}

In the upcoming part of the proof, we will show that for any $\Gamma\in\rmH(\cH_1\otimes\cH_2)$ we obtain the following equality 
\begin{equation}\label{Fept:sum}
    \Theta^*(-\Gamma) + \Xi^*(A^*\Gamma) =\begin{cases}
        \Fept(\Gamma), &\Gamma\in\SDP(\cH_1\otimes\cH_2),\\
        +\infty, & \text{otherwise,}
    \end{cases} 
\end{equation}
where $A^*:\rmH(\cH_1\otimes\cH_2)\to\rmH(\cH_1)\times\rmH(\cH_2)$ is the adjoint operator for $A$, obtained via duality bracket
\begin{equation}\label{oper:Astar}
    \langle A^*\Gamma, (U,V)\rangle = -\langle \Gamma, U\oplus V\rangle = -\Tr[U\otimes \Id\Gamma ] - \Tr [\Id\otimes V\Gamma ] = -\Tr[ U\Gamma_1] -\Tr[ V\Gamma_2].
\end{equation}
% and, consequently, it will follow that 
% \[
% \Fept(\Gamma) = \Theta^*(-\Gamma) + \Xi^*(A^*\Gamma) \geq 
% \]

% The purpose of the upcoming part of the proof is to apply the Theorem \ref{thm:Frenchel-Rock} to the above setting, and use the fact that $(\cK(\cH_1\otimes\cH_2))^* = \rmB_1(\cH_1\otimes\cH_2)$, i.e. the dual space of compact operators is trace class operators. By the following computation, we will show that 
% \begin{equation}\label{Fept:sum}
%     \Theta^*(-\Gamma) + \Xi^*(A^*\Gamma) = \Tr[C\Gamma] + \ep\cS[\Gamma] + \tau_1\cE[\Gamma_1|\rho] + \tau_2\cE[\Gamma_2|\sigma] = \Fept(\Gamma),
% \end{equation}
% for $\Gamma$ semi-definite positive Hermitian operator.

First, consider the convex conjugate of $\Theta$ for any $\Gamma\in \rmH(\cH_1\otimes\cH_2)$
\begin{equation*}
    \Theta^*(-\Gamma) = \sup_{Z\in \rmH(\cH_1\otimes\cH_2)} \left\lbrace \Tr[-Z\Gamma -\ep e^{\frac{-Z-C}{\varepsilon}}]\right\rbrace.
\end{equation*}

Notice that if $\Gamma$ is not semi-definite positive, then there exists $\lambda_k\in \Sp(\Gamma)$ that is negative. Consider the diagonal decomposition $\Gamma = \sum_{i}\lambda_i|\psi_i\rangle\langle\psi_i|$ and take $Z_n = n |\psi_k\rangle\langle\psi_k|$, then
\begin{equation*}
    \Tr[-Z_n\Gamma] = -n\lambda_k \xrightarrow[n\to\infty]{} +\infty,
\end{equation*}
on the other hand, using the (strict) convexity of the trace function $A\mapsto \Tr[\exp(A)]$ (see e.g., Theorem 2.10 in \cite{carlen2010trace}), we get
\begin{equation*}
-\ep\Tr[e^{\frac{-Z_n-C}{\ep}}] \geq -\frac{\ep}{2}\Tr[e^{-\frac{2Z_n}{\ep}}] - \frac{\ep}{2}\Tr[e^{-\frac{2C}{\ep}}]= -\frac{\ep}{2} e^{-\frac{2n}{\ep}} -\frac{\ep}{2}\Tr[e^{-\frac{2C}{\ep}}] \xrightarrow[n\to\infty]{} -\frac{\ep}{2}\Tr[e^{-\frac{2C}{\ep}}],
\end{equation*}
which is a finite value, and whence $\Theta^*(-\Gamma)=+\infty$.
For $\Gamma$ semi-definite positive, invoking the variational principle, and using strict convexity of $\Theta$, we can obtain the unique maximizer $\Tilde{Z} = -C-\ep\log\Gamma$, which yields
\begin{equation}\label{Theta:ctransform}
\begin{aligned}
    \Theta^*(-\Gamma) =& \sup_{Z\in \rmH(\cH_1\otimes\cH_2)} \left\lbrace \Tr[-Z\Gamma -\ep\exp(\frac{-Z-C}{\varepsilon})]\right\rbrace =   \Tr[-\Tilde{Z}\Gamma -\ep\exp(\frac{-\Tilde{Z}-C}{\varepsilon})] =\\
    =&\Tr[-(-C-\ep\log\Gamma)\Gamma -\ep\Gamma] = \Tr[C\Gamma] +\varepsilon\Tr[\Gamma(\log\Gamma-\Id)] = \Tr[C\Gamma] + \varepsilon \cS[\Gamma],
\end{aligned}
\end{equation}
where we use the fact the trace of a product of two operators commutes.

% \augusto{Notice that I omit a lot of details you have done. The details will go to your thesis / TAC report. It is very important that you have done, but we need to simplify a bit in an article.}

Using a similar approach, we will compute $\Xi^*(A^*\Gamma)$, and in this case, the objective can be split into two functions of $U$ and $V$, which can be optimized independently

\begin{equation*}
\begin{aligned}
    \Xi^*(A^*\Gamma) = &\sup_{\substack{U\in\rmH(\cH_1)\\
    V\in\rmH(\cH_2)}} \left\lbrace 
    \langle A^*\Gamma, (U,V)\rangle -\tau_1\Tr[(\exp\left(-\frac{U}{\tau_1}\right)-\Id)\rho]-\tau_2\Tr[(\exp\left(-\frac{V}{\tau_2}\right)-\Id)\sigma] \right\rbrace = \\
    =&\sup_{U\in \rmH(\cH_1)} \left\lbrace 
    -\Tr [ U\Gamma_1] -\tau_1\Tr[\exp\left(-\frac{U}{\tau_1}\right)\rho]+\tau_1\Tr[\rho]\right\rbrace + 
    \\
    +&\sup_{V\in \rmH(\cH_2)} \left\lbrace-\Tr[ V\Gamma_2] - \tau_2\Tr[\exp\left(-\frac{V}{\tau_2}\right)\sigma] +\tau_2\Tr[\sigma] \right\rbrace,
\end{aligned}
\end{equation*}
For simplicity, we will consider only the component for $U$, as the result for $V$ is obtained in a similar manner.

By a similar argument that was used for computing $\Theta^*(-\Gamma)$ for $\Gamma$ not semi-definite positive, one can conclude that $\Xi^*(A^*\Gamma)=+\infty$, therefore it is sufficient to consider $\Gamma\in\SDP(\cH_1\otimes\cH_2)$. Taking the variation once more, we obtain a unique maximizer $\Tilde{U} = \tau_1\log\rho -\tau_1\log\Gamma_1$, since the map $U\mapsto \Tr[\exp(-\frac{U}{\tau_1})\rho]$ is strictly convex due to Proposition \ref{prop:trace_convex_prod}, and thus
%Fix $\chi\in\cK(\cH_1)$ and define $f(\delta) = -\Tr[ (U+\delta\chi)\Gamma_1] -\tau_1\Tr[\exp(-\frac{U+\delta\chi}{\tau_1})\rho] +\tau_1\Tr[\rho]$ for $\delta\in\mathbb R$, and taking it's first variation similarly to (\ref{Theta:variation}) 
%\begin{equation}\label{Xi:variation}
%\begin{aligned}
%    &\lim_{\delta\to 0^+} \frac{-\Tr[ (U +\delta\chi)\Gamma_1] -\tau_1\Tr[\exp(-\frac{U+\delta\chi}{\tau_1})\rho] + \Tr[ U\Gamma_1] +\tau_1\Tr[\exp(-\frac{U}{\tau_1})\rho]}{\delta} = \\
 %   =&\lim_{\delta\to 0^+}\frac{-\delta\Tr [ \chi\Gamma_1] - \tau_1 \Tr[\exp(-\frac{U+\delta\chi}{\tau_1})\rho] +\tau_1 \Tr[\exp(-\frac{U}{\tau_1})\rho]}{\delta} =\\
  %  =&\lim_{\delta\to 0^+} \frac{-\delta\Tr[ \chi\Gamma_1]  - \delta\tau_1\Tr[-\frac{\chi}{\tau_1}\exp(-\frac{U}{\tau_1})\rho] -o(\delta^2) }{\delta} =\\
   % =&\Tr[\chi(-\Gamma_1+\exp(-\frac{U}{\tau_1})\rho)] = 0, ~~~~~~~\forall \chi\in\cK(\cH_1),
%\end{aligned}
%\end{equation}
%where \eqref{Xi:variation} has to hold for any $\chi \in \cK(\cH_1)$, therefore setting $\Tilde{U} = \tau_1\log\rho -\tau_1\log \Gamma_1$ we obtain the maximizer, and whence
\begin{equation*}
\begin{aligned}
    &\sup_{U\in \rmH(\cH_1)} \left\lbrace 
    -\Tr[ U\Gamma_1] -\tau_1\Tr[\exp(-\frac{U}{\tau_1})\rho]+\tau_1\Tr[\rho]\right\rbrace  = \Tr[-\Tilde{U}\Gamma_1-\tau_1 (\exp(-\frac{\Tilde{U}}{\tau_1})-\Id)\rho] = \\
    =& \Tr[-(\tau_1\log\rho -\tau_1\log \Gamma_1 )\Gamma_1 -\tau_1 (\Gamma_1-\rho)] = \tau_1\Tr[\Gamma_1(\log\Gamma_1-\log\rho-\Id)+\rho] = \tau_1\cE[\Gamma_1|\rho].
\end{aligned}
\end{equation*}
 
Consequently, the expression for V takes the following form
$$\sup\limits_{V\in \rmH(\cH_2)} \left\lbrace-\Tr[ V\Gamma_2] - \tau_2\Tr[(\exp(-\frac{V}{\tau_2})-\Id)\sigma] \right\rbrace = \tau_2 \cE[\Gamma_2|\sigma],$$
and thus for $\Gamma$ semi-definite positive
\begin{equation}\label{Xi:ctransform}
    \Xi^*(A^*\Gamma) = \tau_1\cE[\Gamma_1|\rho] +\tau_2 \cE[\Gamma_2|\sigma],
\end{equation}
and combining \eqref{Theta:ctransform} with \eqref{Xi:ctransform} yields $\Fept(\Gamma) = \Theta^*(-\Gamma) + \Xi^*(A^*\Gamma)$.

% \augusto{What do you mean by the following sentence?}

% \aug{Therefore both $\Theta^*(-\Gamma)$ and $\Xi^*(A^*\Gamma)$ act as an indicator functions of $\SDP(\cH_1\otimes\cH_2)$}. 
Finally, see that for $\Gamma\in\SDP(\cH_1\otimes\cH_2)$, $U\in\rmH(\cH_1)$, $V\in\rmH(\cH_2)$ and choosing $Z = A(U,V) = -U\oplus V$, we get
\begin{align*}
    \Fept(\Gamma) \geq  \Tr[(-U\oplus V)(-\Gamma)] -\Theta(-U\oplus V) + \Tr[(-U\oplus V)\Gamma] -\Xi(U,V) = \Dept(U,V),
    %- \ep\Tr[\exp\left(\frac{U\oplus V-C}{\ep}\right)] -\tau_1\Tr[(\exp\left(-\frac{U}{\tau_1}\right)-\Id)\rho]-\tau_2\Tr[(\exp\left(-\frac{V}{\tau_2}\right)-\Id)\sigma] = \Dept(U,V),
\end{align*}
and taking respective infimum and supremum, we conclude the proof.
% by Fenchel-Rockafellar Duality (Theorem \ref{thm:Frenchel-Rock})
% \begin{equation*}
% \begin{aligned}
%     & \sup_{\substack{U\in\cK(\cH_1)\\
%     V\in\cK(\cH_2)}}\lbrace-\tau_1\Tr[(\exp(-\frac{U}{\tau_1})-\Id)\rho]-\tau_2\Tr[(\exp(-\frac{V}{\tau_2})-\Id)\sigma] - \varepsilon\Tr\left[\exp\left(\frac{U\oplus V-C}{\ep}\right)\right]=\\
%     = -&\inf_{\substack{U\in\cK(\cH_1)\\V\in\cK(\cH_2)}} \lbrace\Xi(U,V)+\Theta(A(U,V))\rbrace =- \sup_{\Gamma\in\rmB_1(\cH_1\otimes\cH_2)}\lbrace-\Theta^*(-\Gamma)-\Xi^*(A^*\Gamma)\rbrace =\\
%     =&\inf_{\Gamma\in\SDP(\cH_1\otimes\cH_2)}\lbrace\Tr[C\Gamma] + \ep\cS[\Gamma] + \tau_1\cE[\Gamma_1|\rho] + \tau_2\cE[\Gamma_2|\sigma]\rbrace = \UQOT[\rho,\sigma],
% \end{aligned}
% \end{equation*}
\end{proof}

\begin{rem}
    One can notice that in the proof above we show the one-sided inequality related to the Fenchel-Rockafellar Theorem, restricted to the real subspace of bounded operators in finite dimensions 
    \[
\inf_{(U,V)\in\rmH(\cH_1)\times\rmH(\cH_2)} 
        \{
            \Theta(A(U,V)) + \Xi(U,V)
        \} \geq 
        \sup_{\Gamma \in \rmH(\cH_1\otimes \cH_2)} 
        \{
            -\Theta^*(-\Gamma) - \Xi^*(A^*\Gamma)
        \}.
    \]
\end{rem}

% The duality is obtained by direct application of the Fenchel-Rockafellar theorem. 

%\begin{teo}[Fenchel-Rockafellar duality theorem, \cite{EkeTembook}]\label{thm:Frenchel-Rock}
%    Let $X$, $Y$ be Banach spaces, $A : X \to Y$ be a continuous linear operator. Let $\Xi : X \to \R \cup \{+\infty\}$, $\Theta : Y \to \R \cup \{+\infty\}$ be proper convex functions. Then:
    %\begin{equation}
        %\label{FRDT}
        %\inf_{x \in X} 
   %     \{
            %\Theta(Ax) + \Xi(x)
       % \} = 
       % \sup_{y^* \in Y^*} 
       % \{
       %     -\Theta^*(-y^*) - \Xi^*(A^*y^*)
        %\},
   % \end{equation}
   % where $A^* : Y^* \to X^*$ is an adjoint operator of $A$.
%\end{teo}
% \nat{REMARK about being Hermitian, otherwise sup doesn't make any sense}

% \augusto{Do you think this remark is loo long that could be promoted to a Lemma/Proposition?}

\begin{rem}\label{Fept:lsc}
Notice that by the properties of the Legendre-Fenchel transform, it follows that both $\Theta^*$ and $\Xi^*$ are convex and lower semicontinuous and, as a consequence of \eqref{Theta:ctransform} and \eqref{Xi:ctransform}, so are the von Neumann entropy, the relative entropy, and the primal functional $\Fept$. 
% Indeed, let $\Gamma^n\rightharpoonup^* \Gamma$ (as trace class operators), then by definition, for any (compact) operator $Z$ on $\cH_1\otimes\cH_2$
% \[
% \Tr[Z\Gamma^n]\to\Tr[Z\Gamma],
% \]
% and since $\cH_1\otimes\cH_2$ is finite-dimensional, $C$ is compact and
% \begin{equation*}
%     \Tr[C\Gamma]+\ep\cS[\Gamma]\leq \liminf_{n\to\infty} \lbrace \Tr[C\Gamma^n]+\ep\cS[\Gamma^n]\rbrace = \Tr[C\Gamma] +\ep\liminf_{n\to\infty}\cS[\Gamma^n],
% \end{equation*}
% which gives $\cS[\Gamma]\leq \liminf\limits_{n\to\infty}\cS[\Gamma^n]$.

% Similarly, using the continuity of partial traces,  we obtain from $\Xi^*$ that 
% \begin{equation*}
% \tau_1\cE[\Gamma_1|\rho]+\tau_2\cE[\Gamma_2|\sigma] \leq \liminf_{n\to\infty} \tau_1\cE[\Gamma_1^n|\rho]+\tau_2\cE[\Gamma_2^n|\sigma] = \tau_1\liminf_{n\to\infty} \cE[\Gamma_1^n|\rho]+\tau_2\liminf_{n\to\infty}\cE[\Gamma_2^n|\sigma].
% \end{equation*}
% The latter inequality holds for any $\tau_1,\tau_2>0$. Fixing $\tau_1=1$ and taking $\inf$ over all $\tau_2>0$, we obtain
% \begin{equation*}
%     \cE[\Gamma_1|\rho] \leq \liminf_{n\to\infty} \cE[\Gamma_1^n|\rho],
% \end{equation*}
% and similarly obtain $\cE[\Gamma_2|\sigma]\leq \liminf\limits_{n\to\infty}\cE[\Gamma^n_2|\sigma]$, which implies that relative entropies are lower semicontinuous in $\Gamma$.

% Additionally from \eqref{Fept:sum} we obtained the primal functional $\Fept(\Gamma)$ is a sum of Legendre-Fenchel transforms, which implies that $\Fept$ is also convex and lower semicontinuous.
\end{rem}

\subsection{Existence of a minimizer}

In this section, we will show the existence of a minimizer of the primal problem (\ref{eq:UQOT}). An important part of the proof will play the following lemma, which gives the necessary compactness properties. 

\begin{lemma}[$\Fept(\cdot)$ is coercive]\label{Gamma:compactness}
    Let $\ep, \tau_1,\tau_2>0$ be positive parameters, $\cH_1$ and $\cH_2$ be finite-dimensional Hilbert spaces with dimensions, respectively, $d_1$ and $d_2$. Let $C\in \rmH(\cH_1\otimes\cH_2)$ be a Hermitian operator, $\rho\in\SDP(\cH_1),~\sigma\in\SDP(\cH_2)$ be Hermitian semi-definite positive operators. Assume that $\lbrace\Gamma^n\rbrace_{n\geq 1}$ is a sequence in $\SDP(\cH_1\otimes\cH_2)$ such that   
    \[
    \sup\limits_{n\geq 1} \lbrace\Fept(\Gamma^n)\rbrace < \infty.
    \]
Then there exists $\Gamma^0\in\SDP(\cH_1\otimes\cH_2)$ and a subsequence $\lbrace\Gamma^{n_k}\rbrace_{k\geq 1}$ such that $\Gamma^{n_k}$ converges to $\Gamma^0$ in the weak*-topology.
\end{lemma}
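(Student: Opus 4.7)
The plan is to deduce compactness from a uniform bound on $t_n := \Tr[\Gamma^n]$. In finite dimensions the weak$^*$ topology on $\rmH(\cH_1\otimes\cH_2)$ agrees with the norm topology, and for $\Gamma^n \geq 0$ we have $\|\Gamma^n\|_{\rm op} \leq t_n$, so any bound $t_n \leq T$ delivers a norm-convergent subsequence via Bolzano--Weierstrass, whose limit lies in the closed cone $\SDP(\cH_1\otimes\cH_2)$. Thus the real content of the lemma is the coercivity of $\Fept$ with respect to $\Tr[\Gamma]$.

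For coercivity I would assemble three ingredients. First, Klein's inequality applied to the convex function $f(x)=x\log x$ on $[0,\infty)$ (with $f'(x)=\log x + 1$) gives $\Tr[A(\log A - \log B - \Id) + B] \geq 0$ for all semi-definite positive $A,B$ with compatible kernels, so both relative-entropy terms in $\Fept(\Gamma^n)$ are nonnegative (and finiteness is forced by $\Fept(\Gamma^n)<\infty$). Second, since $\Gamma^n \geq 0$, one has $|\Tr[C\Gamma^n]| \leq \|C\|_{\rm op}\,t_n$. Third, denoting by $(\lambda_i^n)_{i=1}^{d}$, $d = d_1 d_2$, the eigenvalues of $\Gamma^n$, Jensen's inequality for the convex map $x \mapsto x \log x$ (with the convention $0\log 0 = 0$) yields
\[
\cS[\Gamma^n] = \sum_{i=1}^d \lambda_i^n \log\lambda_i^n - t_n \;\geq\; d\cdot\tfrac{t_n}{d}\log\tfrac{t_n}{d} - t_n \;=\; t_n\log(t_n/d) - t_n.
\]

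Combining these three estimates,
\[
\Fept(\Gamma^n) \;\geq\; -\|C\|_{\rm op}\,t_n + \ep\bigl(t_n\log(t_n/d) - t_n\bigr),
\]
and the right-hand side diverges to $+\infty$ as $t_n \to \infty$. The standing hypothesis $\sup_n \Fept(\Gamma^n)<\infty$ therefore forces $\sup_n t_n \leq T < \infty$, and Bolzano--Weierstrass combined with the closedness of $\SDP(\cH_1\otimes\cH_2)$ produces the desired subsequence $\Gamma^{n_k} \to \Gamma^0 \in \SDP(\cH_1\otimes\cH_2)$.

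The main obstacle is really only the entropic lower bound in the third ingredient: this is the only place where the superlinear behavior of $x\log x$ (as opposed to mere nonnegativity of $\cE$) is used, and it is precisely what prevents the trace from escaping to infinity; once it is in hand the rest reduces to elementary finite-dimensional compactness.
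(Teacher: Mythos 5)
Your argument is correct and follows essentially the same route as the paper: Klein's inequality to discard the relative-entropy terms, the superlinearity of $t\mapsto t\log t$ to force the cost-plus-entropy part to dominate, and finite-dimensional compactness to extract the subsequence. The only stylistic differences are that you track $\Tr[\Gamma^n]$ while the paper tracks $\|\Gamma^n\|_\infty$ (equivalent up to dimensional constants for positive operators), and you obtain a clean direct coercivity bound via Jensen's inequality applied to the eigenvalue vector, whereas the paper argues by contradiction eigenvalue-by-eigenvalue; both are valid and of comparable length.
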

%\begin{rem}
%    If the functional satisfies the above property it is called \textit{coercive}.
%\end{rem}
\begin{proof} 
Let $\lbrace\Gamma^n\rbrace_{n\geq 1}$ be a sequence in $\SDP(\cH_1\otimes\cH_2)$ such that   $\sup\limits_{n\geq 1} \lbrace\Fept(\Gamma^n)\rbrace < \infty$. It suffices to show that $\lbrace||\Gamma^n||_\infty\rbrace_{n\geq 1}$ is bounded and apply the Banach-Alaoglu theorem. 

Suppose that $\lbrace\Gamma^n\rbrace_{n\geq 1}$ is not bounded. Then one can extract a subsequence $\lbrace\Gamma^{n_k}\rbrace_{k\geq 1}$ such that $||\Gamma^{n_k}||_\infty\to\infty$. For every $n_k$, let us consider the diagonal decompositions $\Gamma^{n_k} = \sum_{i=1}^{d_1 d_2}\lambda^{n_k}_i|\psi^{n_k}_i\rangle\langle\psi^{n_k}_i|$. Then

% In particular, it implies that either $||\Gamma^{n_k}_1||_\infty\to\infty$ or $||\Gamma^{n_k}_2||_\infty\to\infty$. 
% Also note that in finite-dimensional case all norms are equivalent, thus the corresponding maximal eigenvalues must go to infinity. 
% \begin{equation*}
%     C = \sum_{j} c_j|\xi_j\rangle\langle\xi_j|, \text{~~~~~~~~and~~~~~~~~~} \Gamma^{n_k} = \sum_{i}\lambda^{n_k}_i|\psi_i\rangle\langle\psi_i|.
% \end{equation*}
% and for $\Gamma_1,\Gamma_2,\rho, \sigma$:
% \begin{equation*}
% \Gamma_1 = \sum_j s_j |\chi_j\rangle\langle\chi_j|, ~~~~ \Gamma_2 = \sum_l c_l |\zeta_l\rangle\langle\zeta_l|
% \rho = \sum_t a_t |\alpha_t\rangle\langle\alpha_t|, ~~~~~ \sigma = \sum_h b_h|\beta_h\rangle\langle\beta_h|
% \end{equation*}
% (Then for example)
% \begin{equation*}
%     \cE[\Gamma_1|\rho] = \sum_j s_j(\log s_j-1) - \sum_{jt} s_j\log \alpha_t |\langle\chi_j|\alpha_t\rangle|^2 = \sum_j s_j(\log s_j-1 - (\sum_{jt}\log \alpha_t |\langle\chi_j|\alpha_t\rangle|^2))
% \end{equation*}
% and using the fact that $x(\log x +a)$ is superlinear for $x>1$, then $\cE[\Gamma_1|\rho]\to\infty$ as at least one of $s_j\to+\infty$.
% (Then do identical argument for $\Tr[C\Gamma]+\ep \cS[\Gamma]$ and for another relative entropy).

\begin{align*}
    \Tr[C\Gamma^{n_k}]+\ep\cS[\Gamma] =& \sum_{i=1}^{d_1 d_2} \bra{\psi^{n_k}_i}C\Gamma \ket{\psi^{n_k}_i} + \ep\sum_{i=1}^{d_1 d_2}\lambda^{n_k}_i(\log\lambda^{n_k}_i -1)  =\\
    =& \sum_{i=1}^{d_1 d_2} \left(\bra{\psi^{n_k}_i}C \ket{\psi^{n_k}_i}\lambda^{n_k}_i +\ep \lambda^{n_k}_i(\log\lambda^{n_k}_i -1) \right) \geq \\
    \geq & \sum_{i=1}^{d_1 d_2} \left(-||C||_{\infty}\lambda^{n_k}_i +\ep \lambda^{n_k}_i(\log\lambda^{n_k}_i -1) \right) \to+\infty,
\end{align*}
as one of the eigenvalues $\lambda^{n_k}_i\to +\infty$ and the function $t\mapsto \ep t(\log t -1)$ is superlinear at infinity. In addition, due to Klein's inequality $\tau_1\cE[\Gamma^{n_k}_1|\rho]+\tau_2\cE[\Gamma^{n_k}_2|\sigma]\geq 0$, we finally get that $\Fept(\Gamma^{n_k})\to\infty$, which is a contradiction with the fact that $\sup\limits_{n\geq 1} \lbrace\Fept(\Gamma^n)\rbrace < \infty$. Thus, the sequence $\lbrace||\Gamma^n||_\infty\rbrace_{n\geq 1}$ is bounded and, by Banach-Alaoglu theorem, admits a weakly*-converging subsequence.
\end{proof}

\begin{proposition}[Existence of a minimizer in \eqref{eq:UQOT}]\label{Fept:minimizer}
Let $\ep, \tau_1,\tau_2>0$ be positive parameters, $\cH_1$ and $\cH_2$ be finite-dimensional Hilbert spaces, $C\in \rmH(\cH_1\otimes\cH_2)$ be a Hermitian operator on $\cH_1\otimes\cH_2$ and, $\rho\in\SDP(\cH_1),~\sigma\in\SDP(\cH_2)$ be Hermitian semi-definite positive operators on $\cH_1$ and $\cH_2$. Then there exists a minimizer for the von Neumann entropy regularized Unbalanced Non-commutative Optimal Transport in \eqref{eq:UQOT}.
\end{proposition}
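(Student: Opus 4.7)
My plan is to apply the direct method of the Calculus of Variations, with Lemma \ref{Gamma:compactness} supplying compactness of minimizing sequences and Remark \ref{Fept:lsc} supplying lower semicontinuity of $\Fept$ — all other steps are routine once these two ingredients are in place.

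First I would check that the infimum is finite, so that minimizing sequences exist and are not trivially $+\infty$. The natural test point is $\Gamma = \rho \otimes \sigma$: its partial traces $\Gamma_1 = \Tr[\sigma]\,\rho$ and $\Gamma_2 = \Tr[\rho]\,\sigma$ are scalar multiples of $\rho$ and $\sigma$, so the kernel inclusions required to make the Umegaki entropies $\cE[\Gamma_1|\rho]$ and $\cE[\Gamma_2|\sigma]$ finite hold automatically, and these terms reduce to explicit finite quantities depending only on $\Tr[\rho]$, $\Tr[\sigma]$. In finite dimensions the von Neumann entropy $\cS[\rho\otimes\sigma]$ is finite with the usual convention $0\log 0 = 0$, and $|\Tr[C(\rho\otimes\sigma)]| \leq \|C\|_\infty \Tr[\rho]\Tr[\sigma]$, so $\Fept(\rho\otimes\sigma) < +\infty$. (The degenerate case $\rho = 0$ or $\sigma = 0$ can be handled by taking $\Gamma = 0$.) Combined with the lower bound on $\Fept$ visible in the proof of Lemma \ref{Gamma:compactness} (Klein's inequality for the relative entropy terms, superlinearity of $t\mapsto \ep t(\log t - 1)$ against the linear $\Tr[C\Gamma]$ term), this shows $\inf \Fept \in \R$.

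I would then pick a minimizing sequence $\{\Gamma^n\}_{n\geq 1} \subset \SDP(\cH_1\otimes\cH_2)$ with $\Fept(\Gamma^n) \to \inf \Fept$. In particular $\sup_n \Fept(\Gamma^n) < +\infty$, so Lemma \ref{Gamma:compactness} yields a subsequence $\{\Gamma^{n_k}\}$ converging in the weak* topology to some $\Gamma^0 \in \SDP(\cH_1\otimes\cH_2)$, where I use that the cone $\SDP(\cH_1\otimes\cH_2)$ is weak*-closed. Applying the weak*-lower semicontinuity of $\Fept$ from Remark \ref{Fept:lsc} then gives
\[
\Fept(\Gamma^0) \;\leq\; \liminf_{k\to\infty} \Fept(\Gamma^{n_k}) \;=\; \inf_{\Gamma \in \SDP(\cH_1\otimes\cH_2)} \Fept(\Gamma),
\]
so $\Gamma^0$ attains the infimum.

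The only delicate step is the lower semicontinuity of $\Fept$ under weak*-limits, because the Umegaki entropy is notoriously sensitive to the kernel condition $\ker\Gamma_i \subseteq \ker\rho$ (resp. $\ker\sigma$) and one might worry that this condition fails in the limit. However this is precisely what Remark \ref{Fept:lsc} already guarantees, since the decomposition $\Fept(\Gamma) = \Theta^*(-\Gamma) + \Xi^*(A^*\Gamma)$ established inside the proof of Theorem \ref{thm:UQOT_weakdual} exhibits $\Fept$ as a sum of Legendre--Fenchel conjugates, and such conjugates are automatically convex and weak*-lsc — the $+\infty$ convention on the kernel condition is built into the conjugate. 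In finite dimensions, moreover, weak*-convergence restricted to operator-norm bounded sets coincides with norm convergence, so no further topological issue arises.
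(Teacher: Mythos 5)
Your argument is correct and follows exactly the paper's route: the direct method, with Lemma \ref{Gamma:compactness} for weak* compactness of minimizing sequences and Remark \ref{Fept:lsc} for lower semicontinuity of $\Fept$. The only addition beyond what the paper writes is the explicit check that the infimum is finite via the test point $\Gamma=\rho\otimes\sigma$, which the paper leaves implicit; this is a harmless and reasonable bit of diligence rather than a different approach.
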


\begin{proof}
The proof follows from the direct method of Calculus of Variations. Set $m=\inf\limits_{\Gamma\in\SDP(\cH_1\otimes\cH_2)} \Fept(\Gamma)$ and consider $\lbrace\Gamma^n\rbrace_{n\geq 1}$ a minimizing sequence such that 
\[
\Fept(\Gamma^n)\leq m+\frac{1}{n}\leq m+1.
\]
By Lemma \ref{Gamma:compactness}, there exist $\Gamma^0\in\SDP(\cH_1\otimes\cH_2)$ and a subsequence $\lbrace\Gamma^{n_k}\rbrace_{k\geq 1}$ such that $\Gamma^{n_k}$ weakly*-converges to $\Gamma^0$. Finally, using lower semicontinuity of $\Fept$ (see Remark \ref{Fept:lsc}), we obtain the required chain of inequalities
\begin{equation*}
    m\leq \Fept(\Gamma^0) \leq \liminf\limits_{k\to\infty}\Fept(\Gamma^{n_k}) \leq \liminf\limits_{k\to\infty} m+\frac{1}{n_k} = m,
\end{equation*}
whence the problem \eqref{eq:UQOT} admits a minimizer $\Gamma^0$.
\end{proof}

\subsection{Gamma-convergence when $\ep\to 0^+$ and when $\tau_1=\tau_2\to +\infty$}\label{sec:unbalancedgammaQOT}

We focus on computing the limit cases when $\ep\to 0^+$ or when $\tau_1=\tau_2\to +\infty$ of the primal unbalanced functional $\Fept(\Gamma)$.
Consider the families 
\begin{equation}\label{Fep:family}
    \lbrace\Fept\rbrace_{\ep>0} = \lbrace \Fept(\cdot) = \Tr[C(\cdot)] + \ep\cS[\cdot] + \tau_1\cE[(\cdot)_1|\rho] + \tau_2\cE[(\cdot)_2|\sigma] ,~\ep>0\rbrace,
\end{equation}
and 
\begin{equation}\label{Ftau:family}    \lbrace\cF^{\tau}_{\ep}\rbrace_{\tau>0} = \lbrace \Tr[C(\cdot)] + \ep\cS[\cdot] + \tau\left(\cE[(\cdot)_1|\rho] + \cE[(\cdot)_2|\sigma]\right),~\tau>0\rbrace.
\end{equation}
%Of particular interest are the two cases when $\ep\to 0^+$ or $\tau\to\infty$, respectively.

First, we need to use the following notions.
\begin{deff}[$\Gamma$-convergence, \cite{BraidesA02}]
Let $(\cX, \cT)$ be a topological space. Given a family of functionals
$\cF_{\alpha}: \cX \to \R$ we say that it $\Gamma$-converges to $\cF: \cX \to \R$ in the topology $\cT$ if it satisfies
\begin{enumerate}
    \item[(i)] $\liminf$ inequality
    
    For every $x_{\alpha}\to x$ in $\cT$ holds
\begin{equation}\label{Gamma:liminf}
        \cF(x)\leq \liminf_{\alpha} \cF_{\alpha}(x_{\alpha})
    \end{equation}

    \item[(ii)] $\limsup$ inequality

    For every $x\in\cX$ there exists a sequence $x_{\alpha}\to x$ in $\cT$ such that
\begin{equation}\label{Gamma:limsup}
        \cF(x)\geq \limsup_{\alpha} \cF_{\alpha}(x_{\alpha}).
    \end{equation}   
\end{enumerate}
Or equivalently, if $\lbrace\cF_{\alpha}\rbrace_{\alpha}$ satisfies \eqref{Gamma:liminf}, the condition \eqref{Gamma:limsup} is equivalent to 
\begin{enumerate}
    \item[(ii)'] recovery sequence

    For every $x\in\cX$ there exists a sequence $x_{\alpha}\to x$ in $\cT$ such that
\begin{equation}\label{Gamma:recovery}
        \cF(x)=\lim_{\alpha} \cF_{\alpha}(x_{\alpha}).
    \end{equation}   
\end{enumerate} 
\end{deff}

\begin{deff}[Equi-coercivity]
Let $(\cX, \cT)$ be a topological space. Given a family of functionals $\cF_{\alpha}: \cX \to \R$ we say that it is equi-coercive if for any sequence $\lbrace x_{\alpha}\rbrace_{\alpha}$ such that
\begin{equation}\label{F:equi-coercive}
    \sup_{\alpha}|\cF_{\alpha}(x_{\alpha})|<\infty
\end{equation}
there exists a converging subsequence in topology $\cT$.
\end{deff}

In the following we will compute the $\Gamma$-limits of $\lbrace\Fept\rbrace_{\ep}$ and $\lbrace\cF^{\tau}_{\ep}\rbrace_{\tau}$, and in particular, we will verify that the sequences of minimizers of regularized problems will converge to the minimizers of the $\Gamma$-limits via direct application of the following Theorem.
\begin{teo}[Theorem 7.8, \cite{DalMaso93}]\label{Gamma:fundamental}
Let $(\cX, \cT)$ be a topological space and let $\lbrace \cF_{\alpha}\rbrace_{\alpha}$ be a family of functionals $\cF_{\alpha}:\cX\to\R$. Suppose that $\cF_{\alpha}$ $\Gamma$-converges to $\cF:\cX\to\R$ in $(\cX, \cT)$ and $\lbrace\cF_{\alpha}\rbrace_{\alpha}$ is equi-coercive. Then $\cF$ is coercive and
\begin{equation*}
    \min_{x\in X}\cF(x) = \lim_{\alpha} \inf_{x\in X} \cF_{\alpha}(x).
\end{equation*}
\end{teo}

To begin with, notice that the direct consequence of Lemma \ref{Gamma:compactness} is the equi-coercivity property. 

\begin{lemma}[Equi-coercivity]\label{Fept:equicoercive}
Let $\cH_1$ and $\cH_2$ be finite-dimensional Hilbert spaces with dimensions $d_1$ and $d_2$, respectively. Let $C\in \rmH(\cH_1\otimes\cH_2)$ be a Hermitian operator on $\cH_1\otimes\cH_2$, $\rho\in\SDP(\cH_1),~\sigma\in\SDP(\cH_2)$ be Hermitian semi-definite positive operators on $\cH_1$ and $\cH_2$. Then the families of functionals $\lbrace\Fept\rbrace_{\ep>0}$ and $\lbrace\cF^{\tau}_{\ep}\rbrace_{\tau>0}$ are equi-coercive.
% i.e. given the sequences $\lbrace\Gamma^{\ep} \rbrace_{\ep}$ and $\lbrace\Gamma^{\tau} \rbrace_{\tau}$ in $\SDP(\cH_1\otimes\cH_2)$ such that 
% \begin{equation}\label{Fept:bdd}
% \sup\limits_{\ep>0}\lbrace\Fept(\Gamma^{\ep})\rbrace<\infty  \text{~~~~~ and ~~~~~} \sup\limits_{\tau>0}\lbrace\cF^{\tau}_{\ep}(\Gamma^{\tau})\rbrace<\infty
% \end{equation}
% implies that there exist $\Gamma^0$ and $\Gamma^{\infty}$ in $\SDP(\cH_1\otimes\cH_2)$ and subsequences $\lbrace\Gamma^{\ep(k)}\rbrace_{k\geq 1}$, $ \lbrace\Gamma^{\tau(h)}\rbrace_{h\geq 1}$ such that $\Gamma^{\ep(k)}$ (resp. $\Gamma^{\tau(h)}$) converge to $\Gamma^0$ (resp. $\Gamma^{\infty}$) in weak-* topology.
\end{lemma}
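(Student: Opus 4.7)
The plan is to refine the coercivity argument of Lemma \ref{Gamma:compactness}, ensuring the superlinear growth estimate survives as the varying parameter degenerates. The key observation is that in each family one coefficient stays uniformly positive: $\tau_1, \tau_2$ in $\{\Fept\}_{\ep>0}$ and $\ep$ in $\{\cF^\tau_\ep\}_{\tau>0}$. For the second family the bound of Lemma \ref{Gamma:compactness} is actually independent of $\tau$, so it applies essentially verbatim; the real work is the first family, where the eigenvalue-level argument of that lemma degenerates as $\ep \to 0^+$.

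The first step is to record two trace-level inequalities. Jensen's inequality applied to the eigenvalues of $\Gamma$ gives
\[
\cS[\Gamma] \geq s \log\frac{s}{d_1 d_2} - s, \qquad s := \Tr[\Gamma].
\]
For the relative entropy, decomposing $\Gamma_1 = s\,\tilde\Gamma$ and $\rho = \Tr[\rho]\,\tilde\rho$ into trace and density-matrix parts and invoking nonnegativity of the quantum relative entropy between density matrices yields
\[
\cE[\Gamma_1|\rho] \geq s \log\frac{s}{\Tr[\rho]} - s + \Tr[\rho],
\]
and analogously for $\cE[\Gamma_2|\sigma]$, using $\Tr[\Gamma_1] = \Tr[\Gamma_2] = s$. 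Both of these are superlinear in $s$.

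Next I would argue by contradiction. Take $\{\Gamma^n\} \subset \SDP(\cH_1 \otimes \cH_2)$ with $\sup_n|\cF_n(\Gamma^n)| < \infty$ along a parameter sequence, and suppose $\|\Gamma^n\|_\infty \to \infty$; by positivity this forces $s_n := \Tr[\Gamma^n] \to \infty$. For $\{\Fept\}_{\ep>0}$ along $\ep_n \to 0^+$ I would drop the nonnegative $\tau_2\cE[\Gamma_2^n|\sigma]$, use $\ep_n \cS[\Gamma^n] \geq -\ep_n d_1 d_2$ (uniformly bounded, since $\ep_n \leq 1$ eventually), and apply the trace-reduction to obtain
\[
\cF^{\tau_1, \tau_2}_{\ep_n}(\Gamma^n) \geq -\|C\|_\infty s_n - \ep_n d_1 d_2 + \tau_1 \Big( s_n \log\tfrac{s_n}{\Tr[\rho]} - s_n + \Tr[\rho] \Big) \to +\infty,
\]
because $\tau_1 > 0$ is fixed and $s_n \log s_n$ dominates $s_n$. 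For $\{\cF^\tau_\ep\}_{\tau>0}$ along $\tau_n \to +\infty$ with $\ep > 0$ fixed, I would instead drop the nonnegative $\tau_n$-weighted relative entropies and use the Jensen bound on $\cS$, producing the same superlinear blow-up. Both yield contradictions, so $\|\Gamma^n\|_\infty$ is uniformly bounded and Banach-Alaoglu extracts a weakly*-convergent subsequence.

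The main obstacle is the first family: the argument of Lemma \ref{Gamma:compactness} relies on superlinearity of $t \mapsto \ep\, t(\log t - 1)$, which degenerates as $\ep \to 0^+$. The trace reduction of the relative entropy is exactly what is needed to transfer the superlinear growth to the $\tau_i$-weighted terms, whose coefficients remain uniformly positive along that family.
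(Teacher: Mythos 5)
Your proof is correct. For the $\{\cF^\tau_\ep\}_{\tau>0}$ family (with $\ep>0$ fixed) it coincides with the paper's argument: drop the nonnegative $\tau$-weighted relative entropies and let the von Neumann entropy term supply the superlinear growth, just as in Lemma \ref{Gamma:compactness}. For the $\{\Fept\}_{\ep>0}$ family your route is genuinely different. The paper diagonalizes the partial traces $\Gamma^{\ep(k)}_1,\Gamma^{\ep(k)}_2$ in bases $\{\ket{\phi_i}\},\{\ket{\xi_j}\}$ and runs the superlinearity argument eigenvalue by eigenvalue on $\alpha_i\mapsto \alpha_i(\log\alpha_i - \bra{\phi_i}\log\rho\ket{\phi_i}-1)$, whereas you reduce to a single scalar by splitting $\Gamma_1 = s\,\tilde\Gamma_1$, $\rho=\Tr[\rho]\,\tilde\rho$ with $s=\Tr[\Gamma]$, and invoking Klein's inequality on the normalized pair to get the trace-level bound $\cE[\Gamma_1|\rho]\geq s\log\tfrac{s}{\Tr[\rho]}-s+\Tr[\rho]$. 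Your reduction is arguably cleaner: it sidesteps the basis-dependent terms $\bra{\phi_i^{\ep(k)}}\log\rho\ket{\phi_i^{\ep(k)}}$ (which the paper implicitly controls by $\log\|\rho\|_\infty$) and replaces the vector of eigenvalues by the single quantity $s_n=\Tr[\Gamma^n]$, which is all one needs since $\|\Gamma\|_\infty\leq\Tr[\Gamma]$ for positive operators. Both arguments buy the same thing — the coefficient $\tau_1>0$ of the superlinear term stays bounded away from zero along $\ep_n\to 0^+$ — and both conclude by Banach--Alaoglu exactly as the paper does.
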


\begin{proof}
What we are interested in are the cases when $\ep\to 0^+$ and $\tau\to\infty$. 
Notice again that by Klein's inequality and the fact that $x\log x\geq x-1$, we obtain for $\lbrace\Fept(\Gamma^{\ep})\rbrace_{\ep>0}$
\begin{equation*}
    \ep\cS[\Gamma^{\ep}] +\tau_1\cE[\Gamma^{\ep}_1|\rho]+\tau_2\cE[\Gamma^{\ep}_2|\sigma]\geq -\ep\Tr[\Id]>-\infty \text{~~~~~~for bounded }\ep>0,
\end{equation*}
and for $\lbrace\cF^{\tau}_{\ep}(\Gamma^{\tau})\rbrace_{\tau>0}$
\begin{equation*}
    \ep\cS[\Gamma^{\ep}] +\tau(\cE[\Gamma^{\ep}_1|\rho]+\cE[\Gamma^{\ep}_2|\sigma])\geq -\ep\Tr[\Id]>-\infty \text{~~~~~~for any }\tau>0,
\end{equation*}
thus both lower bounds are independent of $\Gamma$.

Similarly to the proof of Lemma \ref{Gamma:compactness}, assume that the sequences $\lbrace\Gamma^{\ep}\rbrace_{\ep>0}$ and $\lbrace\Gamma^{\tau}\rbrace_{\tau>0}$ are not bounded and extract subsequences $\ep(k)$ and $\tau(h)$ such that $||\Gamma^{\ep(k)}||_\infty\to\infty$ and $||\Gamma^{\tau(h)}||_\infty\to\infty$. By the same superlinearity argument, for $\{\Gamma^{\tau(h)}\}_h$ we have
\begin{align*}
    \Tr[C\Gamma^{\tau(h)}] + \ep\cS[\Gamma^{\tau(h)}] + \tau\left(\cE[\Gamma^{\tau(h)}_1|\rho] + \cE[\Gamma^{\tau(h)}_2|\sigma]\right) \geq \Tr[C\Gamma^{\tau(h)}] + \ep\cS[\Gamma^{\tau(h)}] \to +\infty.
\end{align*}
For $\{\Gamma^{\ep(k)}\}_k$ we can notice that either $||\Gamma^{\ep(k)}_1||_\infty\to\infty$ and/or $||\Gamma^{\ep(k)}_2||_\infty\to\infty$, then write them in diagonal forms 
$$\Gamma^{\ep(k)}_1 = \sum_{i=1}^{d_1}\alpha^{\ep(k)}_i\ket{\phi^{\ep(k)}_i}\bra{\phi^{\ep(k)}_i}, \quad\quad \text{and}\quad\quad \Gamma^{\ep(k)}_2 = \sum_{j=1}^{d_2}\beta^{\ep(k)}_j\ket{\xi^{\ep(k)}_j}\bra{\xi^{\ep(k)}_j},$$
where $\{\ket{\phi^{\ep(k)}_i}\}_{i=1}^{d_1}$, $\{\ket{\xi^{\ep(k)}_j}\}_{j=1}^{d_2}$, $\{\ket{\phi^{\ep(k)}_i\otimes \xi^{\ep(k)}_j}\}_{i,j=1}^{d_1,d_2}$ are orthonormal basises of $\cH_1$, $\cH_2$, $\cH_1\otimes\cH_2$, respectively. We can analyze the relative entropy terms
\begin{align*}
    \cE[\Gamma^{\ep(k)}_1|\rho] = &\Tr[\Gamma^{\ep(k)}_1(\log(\Gamma^{\ep(k)}_1)-\log\rho-\Id)]+\Tr[\rho] = \\
    = & \sum_{i=1}^{d_1} \left(\alpha^{\ep(k)}_i\log\alpha^{\ep(k)}_i -\alpha^{\ep(k)}_i\bra{\phi^{\ep(k)}_i}\log\rho\ket{\phi^{\ep(k)}_i} - \alpha^{\ep(k)}_i \right)+\Tr[\rho],\\
    \cE[\Gamma^{\ep(k)}_2|\sigma] = &\Tr[\Gamma^{\ep(k)}_2(\log(\Gamma^{\ep(k)}_2)-\log\sigma-\Id)]+\Tr[\sigma] = \\
    = & \sum_{j=1}^{d_2} \left(\beta^{\ep(k)}_j\log\beta^{\ep(k)}_j -\beta^{\ep(k)}_j\bra{\xi^{\ep(k)}_j}\log\sigma\ket{\xi^{\ep(k)}_j} - \beta^{\ep(k)}_j \right)+\Tr[\sigma],
\end{align*}
and putting them together in $\Fept(\Gamma^{\ep(k)})$ yields
\begin{align*}
    \Fept(\Gamma^{\ep(k)})=&\Tr[C\Gamma^{\ep(k)}] + \ep\cS[\Gamma^{\ep(k)}] + \tau_1\cE[\Gamma^{\ep(k)}_1|\rho] + \tau_2\cE[\Gamma^{\ep(k)}_2|\sigma] \geq \\
    \geq &\Tr[C\Gamma^{\ep(k)}] - \ep\Tr[\Id] + \tau_1\cE[\Gamma^{\ep(k)}_1|\rho] + \tau_2\cE[\Gamma^{\ep(k)}_2|\sigma] = \\
    = & -\ep\Tr[\Id]+\tau_1\Tr[\rho]+\tau_2\Tr[\sigma] +\sum_{i,j=1}^{d_1,d_2} \bra{\phi^{\ep(k)}_i\otimes \xi^{\ep(k)}_j} C\Gamma \ket{\phi^{\ep(k)}_i\otimes \xi^{\ep(k)}_j} +\\
    & +\sum_{i=1}^{d_1} \alpha^{\ep(k)}_i\left(\log\alpha^{\ep(k)}_i -\bra{\phi^{\ep(k)}_i}\log\rho\ket{\phi^{\ep(k)}_i} - 1 \right) +\\
    &+\sum_{j=1}^{d_2} \beta^{\ep(k)}_j\left(\log\beta^{\ep(k)}_j -\bra{\xi^{\ep(k)}_j}\log\sigma\ket{\xi^{\ep(k)}_j} - 1 \right) \to+\infty,
\end{align*}
as the terms $x\mapsto x(\log x - a)$ are superlinear at infinity.

Whence, we get $\Fept(\Gamma^{\ep(k)})\to\infty$ and $\cF^{\tau}_{\ep}(\Gamma^{\tau(h)})\to\infty$, which contradicts the hypothesis that 
\begin{equation*}
\sup\limits_{\ep>0}\lbrace\Fept(\Gamma^{\ep})\rbrace<\infty  \text{~~~~~ and ~~~~~} \sup\limits_{\tau>0}\lbrace\cF^{\tau}_{\ep}(\Gamma^{\tau})\rbrace<\infty.
\end{equation*}

Consequently, the sequences are bounded, and due to the Banach-Alaoglu theorem, we can extract convergent subsequences.
\end{proof}

\begin{proposition}[$\Gamma-$convergence when $\ep\to 0^+$]\label{Gamma:ep0}
Let $\tau_1,\tau_2>0$ be positive parameters, $\cH_1$ and $\cH_2$ be finite-dimensional Hilbert spaces. Let $C\in \rmH(\cH_1\otimes\cH_2)$ be a Hermitian operator on $\cH_1\otimes\cH_2$, $\rho\in\SDP(\cH_1),~\sigma\in\SDP(\cH_2)$ be Hermitian semi-definite positive operators on $\cH_1$ and $\cH_2$. Define the Unbalanced Non-commutative Optimal Transport functional
\begin{equation}\label{F0}
    \cF^{\tau_1,\tau_2}_0(\Gamma) = \Tr[C\Gamma] + \tau_1\cE[\Gamma_1|\rho] + \tau_2\cE[\Gamma_2|\sigma].
\end{equation}

Then the family $\lbrace\Fept\rbrace_{\ep>0}$ defined in \eqref{Fep:family} $\Gamma$-converges to $\cF^{\tau_1, \tau_2}_0$ in the weak*-topology as $\ep\to 0^+$.
\end{proposition}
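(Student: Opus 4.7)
The plan is to verify separately the $\liminf$ inequality \eqref{Gamma:liminf} and the existence of a recovery sequence \eqref{Gamma:recovery}, exploiting the finite-dimensional setting, in which weak*-convergence on $\rmH(\cH_1\otimes\cH_2)$ coincides with norm convergence; thus $\Gamma^\ep \to \Gamma$ means convergence of the underlying matrices in any norm.

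For the $\liminf$ inequality, I would fix a sequence $\Gamma^\ep \to \Gamma$ in the weak*-topology and decompose
\begin{equation*}
\Fept(\Gamma^\ep) = \Tr[C\Gamma^\ep] + \tau_1\cE[\Gamma_1^\ep|\rho] + \tau_2\cE[\Gamma_2^\ep|\sigma] + \ep\,\cS[\Gamma^\ep],
\end{equation*}
treating each term separately. The linear term $\Tr[C\Gamma^\ep]$ converges to $\Tr[C\Gamma]$ by continuity of the Hilbert--Schmidt pairing with a fixed operator. The partial traces are continuous linear maps, so $\Gamma_i^\ep \to \Gamma_i$ for $i=1,2$, and the lower semicontinuity of the Umegaki relative entropy recorded in Remark \ref{Fept:lsc} yields $\liminf_{\ep\to 0^+} \cE[\Gamma_i^\ep|\,\cdot\,] \geq \cE[\Gamma_i|\,\cdot\,]$. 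Finally, the elementary bound $x(\log x - 1) \geq -1$ for all $x\geq 0$ gives $\cS[\Gamma^\ep] \geq -\Tr[\Id]$, so $\ep\,\cS[\Gamma^\ep] \geq -\ep\Tr[\Id] \to 0$. Summing the four contributions produces $\liminf_{\ep\to 0^+} \Fept(\Gamma^\ep) \geq \cF^{\tau_1,\tau_2}_0(\Gamma)$.

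For the recovery sequence I would take the constant choice $\Gamma^\ep := \Gamma$. If $\cF^{\tau_1,\tau_2}_0(\Gamma) = +\infty$ the required inequality is automatic; otherwise $\Gamma$ is a fixed semi-definite positive operator in finite dimensions whose eigenvalues lie in the bounded interval $[0,\|\Gamma\|]$, and since $x\mapsto x(\log x - 1)$ is continuous there (with the convention $0\log 0 = 0$), $\cS[\Gamma]$ is a finite constant. Consequently
\begin{equation*}
\Fept(\Gamma) = \cF^{\tau_1,\tau_2}_0(\Gamma) + \ep\,\cS[\Gamma] \to \cF^{\tau_1,\tau_2}_0(\Gamma) \qquad \text{as } \ep\to 0^+,
\end{equation*}
which delivers simultaneously the $\limsup$ bound and the recovery identity \eqref{Gamma:recovery}.

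The argument is essentially routine rather than obstructed: the only point that uses more than elementary continuity is the lower semicontinuity of $\cE[\,\cdot\,|\rho]$ precomposed with the partial trace, which is already encapsulated in Remark \ref{Fept:lsc} through the Legendre--Fenchel calculation carried out in the proof of Theorem \ref{thm:UQOT_weakdual}. No nontrivial approximation of $\Gamma$ is needed for the recovery sequence because the perturbation $\ep\,\cS[\,\cdot\,]$ is uniformly negligible on norm-bounded subsets of $\SDP(\cH_1\otimes\cH_2)$ as $\ep\to 0^+$.
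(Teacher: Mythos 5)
Your proof is correct and follows essentially the same route as the paper's: treat the linear term by continuity of the trace pairing, the relative entropies by the lower semicontinuity recorded in Remark~\ref{Fept:lsc}, send the $\ep\cS$ perturbation to zero, and use the constant sequence for recovery. The only cosmetic difference is that you dispatch the entropy term with the one-sided bound $\cS[\Gamma^\ep]\geq -\Tr[\Id]$, whereas the paper invokes the uniform two-sided bound $\sup_\ep|\cS[\Gamma^\ep]|\leq M$ coming from weak* boundedness of the sequence; both are adequate here.
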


\begin{proof}
We need to verify the conditions of $\Gamma$-convergence. First, take a sequence $\lbrace\Gamma^{\ep}\rbrace_{\ep}$ such that $\Gamma^{\ep}\rightharpoonup^*\Gamma^0$ when $\ep\to 0^+$. In particular, this also implies that the sequence $\lbrace\Gamma^{\ep}\rbrace_{\ep}$ is bounded, and whence $\sup\limits_{\ep> 0}|\cS[\Gamma^{\ep}]|\leq M<\infty$ for some $M\in\R$.

By Remark \ref{Fept:lsc}, the relative entropies are lower semicontinuous, thus
\begin{equation*}
    \begin{aligned}
        \cF^{\tau_1,\tau_2}_0(\Gamma^0) =& \liminf_{\ep\to 0} \left\lbrace \Tr[C\Gamma^{\ep}] +  \ep \cS[\Gamma^{\ep}]\right\rbrace+\tau_1\cE[\Gamma_1^0|\rho] + \tau_2\cE[\Gamma_2^0|\sigma] \leq\\
        \leq & \liminf_{\ep\to 0} \left\lbrace \Tr[C\Gamma^{\ep}] +  \ep \cS[\Gamma^{\ep}]+\tau_1\cE[\Gamma_1^{\ep}|\rho] + \tau_2\cE[\Gamma_2^{\ep}|\sigma] \right\rbrace = \liminf_{\ep\to 0} \Fept(\Gamma^{\ep}).
    \end{aligned}
\end{equation*}

Next, for arbitrary $\Gamma^0$ and we can take a constant recovery sequence, then 
\begin{equation*}
    \lim_{\ep\to 0 } \Fept(\Gamma^0) = \lim_{\ep\to 0}\cF^{\tau_1,\tau_2}_0(\Gamma^0)  + \ep \cS[\Gamma^0] = \cF^{\tau_1,\tau_2}_0(\Gamma^0) +\lim_{\ep\to 0}\ep \cS[\Gamma^0] = \cF^{\tau_1,\tau_2}_0(\Gamma^0),
\end{equation*}
therefore both conditions are satisfied, and whence the $\Gamma$-convergence holds.
\end{proof}

\begin{proposition}[$\Gamma-$convergence when $\tau=\tau_1=\tau_2\to+\infty$]\label{Gamma:tauinfty}
Let $\ep>0$ be positive parameter, $\cH_1$ and $\cH_2$ be finite-dimensional Hilbert spaces. Let $C\in \rmH(\cH_1\otimes\cH_2)$ be a Hermitian operator on $\cH_1\otimes\cH_2$, $\rho\in\SDP(\cH_1),~\sigma\in\SDP(\cH_2)$ be Hermitian semi-definite positive operators on $\cH_1$ and $\cH_2$. Then the family $\lbrace\cF^{\tau}_{\ep}\rbrace_{\tau}$  defined in \eqref{Ftau:family}
$\Gamma$-converges to $\cF^{\infty}_{\ep}$ in weak*-topology when $\tau\to\infty$, where
\begin{equation}\label{Finfty}
    \cF^{\infty}_{\ep}(\Gamma) = \Tr[C\Gamma] + \ep\cS[\Gamma] + \begin{cases}
        0, & \Gamma_1=\rho \text{ and } \Gamma_2=\sigma,\\
        +\infty, & \text{otherwise},
    \end{cases}
\end{equation}
which, in particular, is equal to the von Neumann entropy-regularized Non-commutative Optimal Transport functional from \eqref{QOT} when $\rho$ and $\sigma$ are density matrices.
\end{proposition}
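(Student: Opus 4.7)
The plan is to verify the two defining conditions of $\Gamma$-convergence in the weak$^*$-topology: the $\liminf$ inequality and the existence of a recovery sequence. Since $\cH_1\otimes\cH_2$ is finite-dimensional, weak$^*$-convergence coincides with norm convergence, and the partial trace maps $\Gamma\mapsto\Gamma_i$ are continuous.

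\textbf{Recovery sequence.} Given $\Gamma\in\SDP(\cH_1\otimes\cH_2)$ I would simply take the constant sequence $\Gamma^\tau\equiv\Gamma$. If $\Gamma_1=\rho$ and $\Gamma_2=\sigma$, then $\cE[\Gamma_1|\rho]=\cE[\Gamma_2|\sigma]=0$ and hence $\cF^\tau_\ep(\Gamma)=\Tr[C\Gamma]+\ep\cS[\Gamma]=\cF^\infty_\ep(\Gamma)$ for all $\tau>0$. If either constraint fails, $\cF^\infty_\ep(\Gamma)=+\infty$ and the inequality $\cF^\infty_\ep(\Gamma)\geq\limsup_\tau\cF^\tau_\ep(\Gamma^\tau)$ is automatic.

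\textbf{$\liminf$ inequality.} Let $\Gamma^\tau\rightharpoonup^*\Gamma^0$. I may assume $L:=\liminf_{\tau\to\infty}\cF^\tau_\ep(\Gamma^\tau)<+\infty$ (otherwise there is nothing to prove) and, passing to a subsequence $\{\tau_n\}$, that $\cF^{\tau_n}_\ep(\Gamma^{\tau_n})\to L$. Since $\Gamma^{\tau_n}$ converges it is bounded in norm, so by the same superlinearity estimate used in Lemma \ref{Gamma:compactness} the quantity $\Tr[C\Gamma^{\tau_n}]+\ep\cS[\Gamma^{\tau_n}]$ is uniformly bounded below by some constant $-K$. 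Therefore
\[
\tau_n\bigl(\cE[\Gamma^{\tau_n}_1|\rho]+\cE[\Gamma^{\tau_n}_2|\sigma]\bigr)\;\leq\;\cF^{\tau_n}_\ep(\Gamma^{\tau_n})+K\;\leq\;L+K+1
\]
for $n$ large, and dividing by $\tau_n\to+\infty$ yields $\cE[\Gamma^{\tau_n}_1|\rho]+\cE[\Gamma^{\tau_n}_2|\sigma]\to 0$. By Klein's inequality both terms are nonnegative, so each goes to zero individually.

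\textbf{Identification of the limit and conclusion.} By continuity of the partial traces I have $\Gamma^{\tau_n}_i\rightharpoonup^*\Gamma^0_i$, and the lower semicontinuity of the Umegaki relative entropy (Remark \ref{Fept:lsc}) gives $\cE[\Gamma^0_1|\rho]\leq\liminf_n\cE[\Gamma^{\tau_n}_1|\rho]=0$, so $\Gamma^0_1=\rho$; symmetrically $\Gamma^0_2=\sigma$. Hence $\cF^\infty_\ep(\Gamma^0)=\Tr[C\Gamma^0]+\ep\cS[\Gamma^0]$, and using continuity of $\Gamma\mapsto\Tr[C\Gamma]$ together with lower semicontinuity of $\cS$ I conclude
\[
\cF^\infty_\ep(\Gamma^0)\;=\;\Tr[C\Gamma^0]+\ep\cS[\Gamma^0]\;\leq\;\liminf_n\bigl(\Tr[C\Gamma^{\tau_n}]+\ep\cS[\Gamma^{\tau_n}]\bigr)\;\leq\;L.
\]
The main subtlety (and only non-routine step) is extracting the constraint $\Gamma^0_1=\rho$, $\Gamma^0_2=\sigma$ in the limit; this requires the uniform lower bound on $\Tr[C\cdot]+\ep\cS[\cdot]$ coming from boundedness of the converging sequence so that the penalized term can absorb the entire $\liminf$ bound as $\tau_n\to\infty$.
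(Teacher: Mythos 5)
Your proof is correct and follows essentially the same strategy as the paper: a constant recovery sequence and a $\liminf$ inequality driven by lower semicontinuity of the von Neumann entropy and the Umegaki relative entropy. The only organizational difference is in the $\liminf$ step: the paper splits into cases depending on whether the weak$^*$-limit $\Gamma^\infty$ already satisfies the marginal constraints, whereas you assume $L=\liminf_\tau\cF^\tau_\ep(\Gamma^\tau)<\infty$, deduce $\cE[\Gamma^{\tau_n}_1|\rho]+\cE[\Gamma^{\tau_n}_2|\sigma]\to 0$ by dividing out $\tau_n$, and then use lower semicontinuity to force $\Gamma^0_1=\rho$, $\Gamma^0_2=\sigma$ before applying lsc to the remaining terms; both versions give the same conclusion.
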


\begin{proof}
Similarly, we will split the proof into two parts to verify the necessary conditions. 

First, take $\lbrace\Gamma^{\tau}\rbrace_{\tau}$ such that $\Gamma^{\tau}\rightharpoonup^*\Gamma^{\infty}$ when $\tau\to\infty$. By lower semicontinuity of $\Tr[C\Gamma]+\ep\cS[\Gamma]$ (see Remark \ref{Fept:lsc}) we directly get
\begin{equation}\label{TrCG+entr:lsc}
    \Tr[C\Gamma^{\infty}]+\ep\cS[\Gamma^{\infty}]\leq \liminf_{\tau\to\infty} \lbrace\Tr[C\Gamma^{\tau}]+\ep\cS[\Gamma^{\tau}]\rbrace.
\end{equation}

On the other hand, we need to consider the following two cases and use the lower semicontinuity of the relative entropies, 

\begin{itemize}
    \item If $\Gamma^{\infty}_1=\rho$ and $\Gamma^{\infty}_2=\sigma$, then $\cE[\Gamma^{\infty}_1|\rho]=  \cE[\Gamma^{\infty}_2|\sigma]=0$ and whence
    \begin{equation*}
        0 = \cE[\Gamma^{\infty}_1|\rho]+  \cE[\Gamma^{\infty}_2|\sigma]\leq \liminf_{\tau\to\infty}(\cE[\Gamma^{\tau}_1|\rho]+  \cE[\Gamma^{\tau}_2|\sigma]) \leq \liminf_{\tau\to\infty}\tau(\cE[\Gamma^{\tau}_1|\rho]+  \cE[\Gamma^{\tau}_2|\sigma]).
    \end{equation*}

    \item If $\Gamma^{\infty}_1\neq\rho$ or $\Gamma^{\infty}_2\neq\sigma$, then exists $\delta>0$ such that $\cE[\Gamma^{\infty}_1|\rho]+  \cE[\Gamma^{\infty}_2|\sigma]=
    \delta$, which along with lower semicontinuity gives
    \begin{equation*}
        \liminf_{\tau\to\infty} \tau(\cE[\Gamma^{\tau}_1|\rho]+  \cE[\Gamma^{\tau}_2|\sigma])\geq \lim_{\tau\to\infty} \tau\delta  = +\infty.
    \end{equation*}    
\end{itemize}
Combining both cases above with \eqref{TrCG+entr:lsc}, the $\liminf$ inequality follows.

Next, for arbitrary $\Gamma^{\infty}$, take the constant recovery sequence $\lbrace\Gamma^{\infty}\rbrace$. 
\begin{itemize}
    \item If $\Gamma^{\infty}_1=\rho$ and $\Gamma^{\infty}_2=\sigma$, then $\cE[\Gamma^{\tau}_1|\rho]=  \cE[\Gamma^{\tau}_2|\sigma]=0$.
    \item Conversely, if $\Gamma^{\infty}_1\neq\rho$ or $\Gamma^{\infty}_2\neq\sigma$, then again $\cE[\Gamma^{\infty}_1|\rho]+  \cE[\Gamma^{\infty}_2|\sigma]=
    \delta>0$
\end{itemize}
Summarizing both cases, we obtain that the condition \eqref{Gamma:recovery} is satisfied 
\begin{equation*}
\begin{aligned}
    \lim_{\tau\to\infty} \cF^{\tau}_{\ep}(\Gamma^{\infty}) &= \lim_{\tau\to\infty} \Tr[C\Gamma^{\infty}]+\ep\cS[\Gamma^{\infty}] +\begin{cases}
        0, & \Gamma^{\infty}_1=\rho \text{~and~} \Gamma^{\infty}_2=\sigma\\
        \tau\delta, & \text{otherwise}         
    \end{cases} = \\
    &=\Tr[C\Gamma^{\infty}]+\ep\cS[\Gamma^{\infty}] +\begin{cases}
        0, & \Gamma^{\infty}_1=\rho \text{~and~} \Gamma^{\infty}_2=\sigma\\
        +\infty, & \text{otherwise}  
    \end{cases}
    = \cF^{\infty}_{\ep}(\Gamma^{\infty}),   
\end{aligned}
\end{equation*}
which concludes the proof.
\end{proof}

The final result of this section establishes the convergence of the minimizers of the regularized problems $\lbrace\Fept\rbrace_{\ep>0} $ and $ \lbrace\cF^{\tau}_{\ep}\rbrace_{\tau>0}$, and as a consequence, the existence of the minimizers in the $\Gamma-$limits.
\begin{teo}[Convergence of the minima when $\ep\to 0^+$ or $\tau\to+\infty$]\label{thm:UQOT_convergence_minimizer}
Let $\cH_1$ and $\cH_2$ be finite-dimensional Hilbert spaces. Let $C\in \rmH(\cH_1\otimes\cH_2)$ be a Hermitian operator on $\cH_1\otimes\cH_2$, $\rho\in\SDP(\cH_1),~\sigma\in\SDP(\cH_2)$ be Hermitian semi-definite positive operators on $\cH_1$ and $\cH_2$. 
Consider the families $\lbrace\Fept\rbrace_{\ep>0}, \lbrace\cF^{\tau}_{\ep}\rbrace_{\tau>0}$ and the respective sequences of their minimizers $\lbrace\Gamma^{\ep}\rbrace_{\ep}, \lbrace\Gamma^{\tau}\rbrace_{\tau}$. Then 

\begin{itemize}
\item[(i)] There exists $\Gamma^0$ such that, up to taking subsequences, $\Gamma^{\ep}\rightharpoonup^*\Gamma^0$ and 
\begin{equation*}
    \lim_{\ep\to 0}\Fept(\Gamma^{\ep}) = \min_{\Gamma\in\SDP(\cH_1\otimes\cH_2)} \cF^{\tau_1,\tau_2}_0 (\Gamma) = \cF^{\tau_1,\tau_2}_0 (\Gamma^0). 
\end{equation*}

\item[(ii)] If $\Tr[\rho]=\Tr[\sigma]$, there exists $\Gamma^{\infty}$ such that, up to taking subsequences, $\Gamma^{\tau}\rightharpoonup^*\Gamma^{\infty}$ and
\begin{equation*}
    \lim_{\tau\to \infty}\cF^{\tau}_{\ep}(\Gamma^{\tau}) = \min \lbrace \cF^{\infty}_{\ep} (\Gamma)\suchthat \Gamma\in\SDP(\cH_1\otimes\cH_2),~ \Gamma_1=\rho,~\Gamma_2=\sigma \rbrace= \cF^{\infty}_{\ep} (\Gamma^{\infty}),
\end{equation*}
which coincides with the solution of the von Neumann entropy regularized Non-commutative Optimal transport problem, provided that $\rho$ and $\sigma$ are density matrices \eqref{QOT}.
% where the symbol $\Gamma\mapsto (\rho,\sigma)$ means that $ \Gamma\in\SDP(\cH_1\otimes\cH_2)$ is a semi-definite positive and Hermitian operator having partial traces equal to $\rho$ and $\sigma$, i.e.  $\Gamma_1=\rho$ and $\Gamma_2=\sigma$.
\end{itemize}
\end{teo}
\begin{proof}
By Lemma \ref{Fept:equicoercive} and Propositions \ref{Gamma:ep0} and \ref{Gamma:tauinfty} we get the sufficient conditions to apply Theorem \ref{Gamma:fundamental}, hence
\begin{equation*}
    \min_{\Gamma\in \SDP(\cH_1\otimes\cH_2)}\cF^{\tau_1,\tau_2}_0 (\Gamma) = \lim_{\ep\to 0} \min_{\Gamma\in \SDP(\cH_1\otimes\cH_2)} \Fept(\Gamma)=\lim_{\ep\to 0}\Fept(\Gamma^{\ep}), 
\end{equation*}
and
\begin{equation*}
    \min_{\Gamma\mapsto (\rho,\sigma)}\cF^{\infty}_{\ep} (\Gamma) = \lim_{\tau\to \infty} \min_{\Gamma\in \SDP(\cH_1\otimes\cH_2)} \cF^{\tau}_{\ep}(\Gamma) = \lim_{\tau\to \infty} \cF^{\tau}_{\ep}(\Gamma^{\tau}).
\end{equation*}

In particular, this implies that the sequences of the minimum are bounded, namely  
\begin{equation*}
\sup\limits_{\ep>0}\lbrace\Fept(\Gamma^{\ep})\rbrace<\infty  \text{~~~~~ and ~~~~~} \sup\limits_{\tau>0}\lbrace\cF^{\tau}_{\ep}(\Gamma^{\tau})\rbrace<\infty.
\end{equation*}
By equi-coercivity, we can find $\Gamma^0, \Gamma^{\infty}\in\SDP(\cH_1\otimes\cH_2)$ and extract the subsequences $\ep(k), \tau(h)$ such that $\Gamma^{\ep(k)}\rightharpoonup^*\Gamma^0$ and $\Gamma^{\tau(h)}\rightharpoonup^*\Gamma^{\infty}.$
Consequently, 
\begin{equation*}
    \min_{\Gamma\in \SDP(\cH_1\otimes\cH_2)}\cF^{\tau_1,\tau_2}_0 (\Gamma) \leq \cF^{\tau_1,\tau_2}_0 (\Gamma^0)\leq \liminf_{k\to\infty} \cF^{\tau_1,\tau_2}_{\ep(k)} (\Gamma^{\ep(k)}) = \min_{\Gamma\in \SDP(\cH_1\otimes\cH_2)}\cF^{\tau_1,\tau_2}_0 (\Gamma),
\end{equation*}
and similarly
\begin{equation*}
    \min_{\Gamma\mapsto (\rho,\sigma)}\cF^{\infty}_{\ep} (\Gamma) \leq \cF^{\infty}_{\ep} (\Gamma^{\infty})\leq \liminf_{h\to\infty} \cF^{\tau(h)}_{\ep} (\Gamma^{\tau(h)}) = \min_{\Gamma\mapsto (\rho,\sigma)}\cF^{\infty}_{\ep} (\Gamma).
\end{equation*}
Therefore, it follows that $\cF^{\tau_1,\tau_2}_{0}$ (resp. $\cF^{\infty}_{\ep}$) admits a minimizer $\Gamma^0$ (resp. $\Gamma^{\infty}$).
\end{proof}

\section*{Acknowledgment}
The authors are very grateful to Emanuele Caputo for the feedback and useful suggestions for the first draft of the article. Both AG and NM acknowledge support of their research by the Canada Research Chairs Program, the Natural Sciences and Engineering Research Council of Canada and the New Frontiers in Research Fund [NFRFE-2021-00798].

\bibliographystyle{siam}
\bibliography{refs}

\end{document}